\def\baselinestretch{1.2}
\theoremstyle{plan}
\newtheorem{theorem}{Theorem}[section]
\newtheorem{lemma}[theorem]{Lemma}
\newtheorem{corollary}[theorem]{Corollary}
\newtheorem{proposition}[theorem]{Proposition}
\theoremstyle{definition}
\newtheorem{definition}[theorem]{Definition}
\newtheorem{example}[theorem]{Example}
\newtheorem{remark}[theorem]{Remark}
\begin{document}

\title[Path lifting properties and embedding between RAAGs]
{Path lifting properties\\ and embedding between RAAGs}

\author{Eon-Kyung Lee and Sang-Jin Lee}

\address{Department of Mathematics, Sejong University, Seoul, 143-747, Korea}

\email{eonkyung@sejong.ac.kr}

\address{Department of Mathematics, Konkuk University, Seoul, 143-701, Korea}

\email{sangjin@konkuk.ac.kr}

\date{\today}

\begin{abstract}
For a finite simplicial graph $\Gamma$, let $G(\Gamma)$ denote
the right-angled Artin group on the complement graph of $\Gamma$.
In this article, we introduce the notions of
``induced path lifting property'' and ``semi-induced path lifting property''
for immersions between graphs,
and obtain graph theoretical criteria for the embedability
between right-angled Artin groups.
We recover the result of S.-h.{} Kim and T.{} Koberda
that an arbitrary  $G(\Gamma)$ admits a quasi-isometric
group embedding into $G(T)$ for some finite tree $T$.
The upper bound on the number of vertices of $T$ is improved from
$2^{2^{(m-1)^2}}$ to $m2^{m-1}$,
where $m$ is the number of vertices of $\Gamma$.
We also show that the upper bound on the number of vertices of $T$
is at least $2^{m/4}$.
Lastly, we show that $G(C_m)$ embeds in $G(P_n)$ for $n\geqslant 2m-2$,
where $C_m$ and $P_n$ denote the cycle and path graphs
on $m$ and $n$ vertices, respectively.

\medskip\noindent
{\em Keywords\/}:
Right-angled Artin group, quasi-isometry, immersion  between graphs, path lifting.\\
{\em 2010 Mathematics Subject Classification\/}: Primary 20F36; Secondary 20F65
\end{abstract}

\maketitle

\section{Introduction}\label{sec:intro}

For a finite simplicial graph $\Gamma$, let $G(\Gamma)$ denote
the right-angled Artin group on the complement graph of $\Gamma$.

This paper is motivated by the recent work~\cite{KK13b} of
S.-h.{} Kim and T.{} Koberda that
if $\Gamma$ is a graph with $m$ vertices
then $G(\Gamma)$ is embeddable into $G(T)$ for some
tree $T$ with at most $2^{2^{(m-1)^2}}$ vertices,
hence $G(\Gamma)$ is embeddable into the $n$-strand braid group $B_n$
with $n \leqslant 2^{2^{m^2}}$.
This result has several important corollaries.

At the first glance of their paper, we thought that the double
exponential upper bound for the braid index is far from being sharp
(one may expect a polynomial upper bound)
and that their construction is very interesting and instructive,
but it is not simple enough for practical uses.

We have tried to make a new construction which gives
a polynomial upper bound on the number of vertices of the tree $T$.
Though we were short of this goal,
we succeeded in improving their results as follows.
First, the double exponential upper bound $2^{2^{(m-1)^2}}$
is improved to an exponential upper bound $m2^m$.
Second, we show that an exponential upper bound
is unavoidable as long as the embedding $G(\Gamma)\to G(T)$
is induced by an immersion $T\to\Gamma$
as in the construction of Kim and Koberda.

These come from graph theoretical criteria on immersions between graphs
for the embeddability between associated right-angled Artin groups,
using \emph{path lifting properties} of immersions.
We think that these properties are simple enough for practical uses.
In particular, we show that
$G(C_n)$ is embeddable into $G(P_{2n-2})$,
where $C_m$ and $P_m$ denote the cycle and path graphs
on $m$ vertices, respectively.
This generalizes the result of
M.{} Casals-Ruiz, A.{} Duncan and I.{} Kazachkov~\cite{CDK13}
that $G(C_5)$ is embeddable in $G(P_8)$.

\subsection{Right-angled Artin groups}

Throughout the paper, all graphs are assumed to be undirected and
simplicial (that is, without loops or multiple edges).
Let $\Gamma$ and $\Gamma_1$ be graphs.
We denote by $V(\Gamma)$ and $E(\Gamma)$
the vertex and edge sets of $\Gamma$, respectively.
The notation ``$\Gamma_1 \leqslant \Gamma$'' means that
$\Gamma_1$ is an induced subgraph of $\Gamma$,
that is, $V(\Gamma_1) \subseteq V(\Gamma)$ and
$E(\Gamma_1)=\{\, \{v_1,v_2\}\in E(\Gamma)\mid v_1,v_2\in V(\Gamma_1) \,\}$.
For a vertex $v\in V(\Gamma)$, the \emph{link} of $v$ in $\Gamma$ is the set
${\operatorname{Lk}}_\Gamma(v)=\{\, u\in V(\Gamma)\mid \{v,u\}\in E(\Gamma) \,\}$.
For $A \subseteq V(\Gamma)$, we denote by $\Gamma{\backslash} A$
the induced subgraph of $\Gamma$ on $V(\Gamma){\backslash} A$.

For a finite graph $\Gamma$,
the \emph{right-angled Artin group} (RAAG) $A(\Gamma)$ on $\Gamma$ is defined by the presentation
$$ A(\Gamma)=\langle\, v\in V(\Gamma)\mid [v_i,v_j]=1\
\mbox{if $\{v_i,v_j\}\in E(\Gamma)$}\,\rangle.$$
In the present paper, we use the opposite convention
$$ G(\Gamma)=\langle\, v\in V(\Gamma)\mid [v_i,v_j]=1\
\mbox{if $\{v_i,v_j\}\not\in E(\Gamma)$}\,\rangle. $$
In other words, $G(\Gamma)=A(\Gamma^c)$, where $\Gamma^c$ denotes the complement graph of $\Gamma$.

For metric spaces $(X,d_X)$ and $(Y,d_Y)$, a map $f:X\to Y$ is a \emph{quasi-isometric embedding}
if there is a constant $C\geqslant 1$ such that for any $x_1,x_2\in X$
$$
d_X(x_1,x_2)/C -C \leqslant d_Y(f(x_1),f(x_2)) \leqslant  C d_X(x_1,x_2)+C.
$$
For finitely generated groups $G$ and $H$, a \emph{quasi-isometric group embedding from $G$ to $H$}
is an injective homomorphism $f:G\to H$ such that it is a quasi-isometric embedding when $G$
and $H$ are endowed with word-metrics.

\subsection{Embeddability of RAAGs into braid groups}

An interesting question in the theory of RAAGs is,  given a graph $\Gamma$,
for which surface $S$ the group $G(\Gamma)$ admits an embedding into the mapping class group ${\operatorname{Mod}}(S)$. (See~\cite{CW07, CLM12,KK13b,KK14} for instance.)
It is well-known that if there is an embedding of the graph $\Gamma$ into $S_{g,p}$
(the orientable surface with genus $g$ and $p$ punctures),
then $G(\Gamma)$ admits a quasi-isometric embedding into ${\operatorname{Mod}}(S_{g,q})$
for some $q\geqslant p$.
For example, if $\Gamma$ is a planar graph, then $G(\Gamma)$ is
embeddable into the mapping class group of a punctured sphere.
Notice that, in this construction, the genus of the surface is at least
the graph-genus of $\Gamma$ (i.e.\ the smallest genus of a surface $S$
which admits an embedding $\Gamma\hookrightarrow S$).
Therefore one may ask the following question.

\medskip

\noindent\textbf{Question.}\ \ 
For a finite (non-planar) graph $\Gamma$, does $G(\Gamma)$ admit
a quasi-isometric embedding into the mapping class group of a punctured sphere?

\medskip

It was answered affirmatively by S.-h.{} Kim and T.{} Koberda~\cite{KK13b}.

\begin{theorem}[\cite{KK13b}]\label{thm:KK14}
For each finite graph $\Gamma$, there exists a finite tree $T$ such that
$G(\Gamma)$ admits a quasi-isometric group embedding into $G(T)$.
\end{theorem}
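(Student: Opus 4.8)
The plan is to realise the embedding geometrically, as the homomorphism induced by an \emph{immersion} $\phi\colon F\to\Gamma$ from a finite forest $F$, which I then promote to a tree. Recall that $G(\Gamma)=A(\Gamma^c)$, so the generators of $G(\Gamma)$ are the vertices of $\Gamma$, and two of them commute precisely when they are non-adjacent in $\Gamma$. For any edge-preserving map $\phi\colon F\to\Gamma$ I would define $\phi_*\colon G(\Gamma)\to G(F)$ on generators by
$$\phi_*(v)=\prod_{\tilde v\in\phi^{-1}(v)}\tilde v .$$
This is a well-defined homomorphism: if $v,w$ are non-adjacent in $\Gamma$ then, as $\phi$ preserves edges, no preimage of $v$ is adjacent in $F$ to a preimage of $w$, so $\phi_*(v)$ and $\phi_*(w)$ commute in $G(F)$. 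To finish I would use that a forest sits as an induced subgraph of a tree $T$ (adjoin one new vertex joined to a single vertex of each component) and that RAAGs embed along induced subgraphs, giving $G(F)\hookrightarrow G(T)$; this also lets me ignore whether $\Gamma$ is connected. So the task reduces to choosing $F$ and $\phi$ so that $\phi_*$ is an undistorted injection.

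For the forest I would track the state of a simple path traced in $\Gamma$. A vertex of $F$ is a pair $(v,S)$ with $v\in S\subseteq V(\Gamma)$, where $v$ is the current endpoint and $S$ is the set of vertices used so far, and $(v,S)$ is joined to $(w,S\cup\{w\})$ whenever $w$ is adjacent to $v$ in $\Gamma$ and $w\notin S$. Since $S$ grows strictly along each edge the graph has no cycle, so $F$ is a forest, and the projection $(v,S)\mapsto v$ is an immersion. The construction is designed to enjoy the \emph{induced path lifting property}: any induced path $p_0p_1\cdots p_k$ in $\Gamma$ lifts to $(p_0,\{p_0\}),(p_1,\{p_0,p_1\}),\dots$ in $F$, because a simple path never revisits a vertex. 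Counting the pairs $(v,S)$ with $v\in S$ bounds the vertices of $F$ by $m\,2^{m-1}$, which is the source of the improved upper bound.

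The heart of the matter, and the step I expect to be hardest, is the criterion that the induced path lifting property forces $\phi_*$ to be injective. I would argue through normal forms. Given a non-trivial reduced word $w=v_1^{e_1}\cdots v_k^{e_k}$ in $G(\Gamma)$, the image $\phi_*(w)$ is a long product in $G(F)$ in which a priori much cancellation is possible, so the goal is to exhibit a surviving syllable. The idea is to locate a witnessing induced subconfiguration carried by the support of $w$, lift it via the path lifting property, and use it to select a consistent sequence of preimages; local injectivity of the immersion should guarantee that a cancellation downstairs in $G(F)$ can occur only over a cancellation already present upstairs in $G(\Gamma)$, contradicting reducedness. Identifying the correct induced configuration inside an arbitrary reduced word, and verifying that its lift stays reduced, is the technical crux, and is presumably where the ``semi-induced'' refinement of the lifting property enters.

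Finally, to upgrade injectivity to a \emph{quasi-isometric} group embedding I would make this argument quantitative rather than bring in new machinery. Since $\phi_*$ sends each generator to a word of length at most $m$, it is Lipschitz; for the matching lower bound it suffices to show that the reduced length of $\phi_*(w)$ in $G(F)$ is at least a fixed multiple of the reduced length of $w$ in $G(\Gamma)$. This is precisely the quantitative form of the surviving-syllable estimate, so the same normal-form analysis that produces one surviving letter, applied uniformly along $w$, should give the linear lower bound and hence the quasi-isometric embedding. Assembling the pieces --- well-definedness from edge preservation, the $m\,2^{m-1}$-vertex forest with induced path lifting, the lifting-implies-embedding criterion, and the passage from forest to tree --- yields the theorem.
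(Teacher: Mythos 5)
There is a genuine gap, and in fact two. First, your ``forest'' $F$ of states $(v,S)$ is not a forest: $S$ grows strictly only along one \emph{orientation} of each edge, and a cycle may alternate ascending and descending steps. Concretely, suppose $\Gamma$ contains vertices $a,b,c,d$ with $\{a,c\},\{c,b\},\{b,d\},\{d,a\}\in E(\Gamma)$ (a $4$-cycle; take $\Gamma=K_4$ to be definite). Then, by your edge rule,
$$(b,\{a,b\})-(c,\{a,b,c\})-(a,\{a,b\})-(d,\{a,b,d\})-(b,\{a,b\})$$
is a $4$-cycle in your graph: each step either adjoins the new endpoint to $S$ or deletes the old endpoint from it, and all four edges are legal since $c$ and $d$ are each adjacent to both $a$ and $b$. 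With $\Gamma=K_4$ this cycle even lies in the component of the base state $(a,\{a\})$, via the edge $(a,\{a\})-(b,\{a,b\})$, so it cannot be discarded by restricting to the components you actually use. Since the entire point of the theorem is to land in $G(T)$ for a \emph{tree}, this breaks the construction as stated. The paper avoids exactly this by never identifying distinct paths with the same endpoint and the same support: it builds $\Lambda$ inside the universal cover $\tilde\Gamma$ as the union of lifts, from a lift of a spanning tree, of all maximal \emph{semi-induced} paths (Proposition~\ref{thm:sipl-emb} and Corollary~\ref{cor:sipl-tree}), where any connected induced subgraph is automatically a tree; the bound $m2^{m-1}$ there counts semi-induced paths, so your matching count of pairs $(v,S)$ is a coincidence rather than a correct cardinality for a tree.

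Second, the step you yourself flag as the heart --- ``the induced path lifting property forces $\phi_*$ to be injective'' --- is not merely the technical crux but precisely the converse implication the paper leaves \emph{open}. The paper proves SIPL $\Rightarrow$ $F$-surviving (Theorem~\ref{thm:sipl2surv}, via the induction of Lemmas~\ref{lem:link-sipl} and~\ref{lem:link-s}), then gets the quasi-isometric embedding from surviving plus $\phi(F)=V(\Gamma)$ (Lemma~\ref{lem:KK10}); in the other direction it proves only $F$-surviving $\Rightarrow$ IPL (Theorem~\ref{thm:ipl}), and the remark following Theorem~\ref{main:upperbd}'s announcement in \S\ref{sec:intro} explicitly records that whether IPL implies surviving is unknown. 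So your plan rests on an unproved, possibly false, statement, and your sketch (``local injectivity should guarantee that a cancellation downstairs occurs only over a cancellation upstairs'') supplies none of the cancellation analysis that Lemma~\ref{lem:link-s} carries out. A partial repair: your state construction, where defined, lifts \emph{every} simple path from the base states $(v,\{v\})$, i.e.\ it has the stronger property PL, which does imply surviving through Theorem~\ref{thm:sipl2surv}; but you would still need to prove that implication, and you would still need a genuinely tree-shaped domain, which is what the universal-cover construction provides.
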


This theorem has several interesting corollaries:
any RAAG admits quasi-isomorphic group embeddings into a pure braid group
and into the area-preserving diffeomorphism groups of the 2-disk and the 2-sphere;
every finite-volume hyperbolic 3-manifold group is virtually a
quasi-isometrically embedded subgroup of a pure braid group.

\subsection{The Kim-Koberda construction}
Let us briefly review the construction of Kim and Koberda.

A \emph{map of graphs} $\phi:\Lambda\to\Gamma$ consists of a pair of functions,
vertices to vertices and edges to edges, preserving the structure,
i.e.\ $\phi$ sends adjacent vertices to adjacent vertices.

Let $\phi:\Lambda\to\Gamma$ be a map of graphs between finite graphs.
Then $\phi$ induces a group homomorphism $\phi^*:G(\Gamma)\to G(\Lambda)$
defined by
$$\phi^*(v)=\prod\limits_{v'\in \phi^{-1}(v)} v'$$
for $v\in V(\Gamma)$, where the product is defined to be the identity
if $\phi^{-1}(v)$ is the empty set.
Since $\phi$ is a map of graphs and since $\Gamma$ has no loops,
no two vertices of $\phi^{-1}(v)$ are adjacent, hence the product is well-defined.

We say that $\phi:\Lambda\to\Gamma$ is \emph{$F$-surviving} for $F \subseteq V(\Lambda)$
if for any $v'\in F$ and for any reduced word $w$ in $G(\Gamma)$,
the word $\phi^*(w)$ has no cancellation of $v'$. (See \S2 for details.)
It is not difficult to observe that
if $\phi$ is $F$-surviving for some $F \subseteq V(\Lambda)$ with $\phi(F)=V(\Gamma)$
then $\phi^*:G(\Gamma)\to G(\Lambda)$ is a quasi-isometric group embedding.

In~\cite{KK13b}, Kim and Koberda considered
the universal cover $p:\tilde \Gamma\to\Gamma$,
a finite induced subgraph $T \leqslant \tilde \Gamma$,
and the restriction $\phi=p|_T:T\to \Gamma$.
Here, we may assume that $\Gamma$ and $T$ are connected, hence $T$ is a tree.
They showed that
\emph{if $T$ is sufficiently large,
then $\phi$ is $F$-surviving for some $F \subseteq V(T)$ with $\phi(F)=V(\Gamma)$,
hence $\phi^*:G(\Gamma)\to G(T)$ is a quasi-isometric group embedding.}
In order to achieve $\phi$ being $F$-surviving, they repeatedly enlarged
the tree $T$ by taking union with its images
under a certain collection of ``deck transformations''.
They showed that one can take $T$ such that
$|V(T)| \leqslant 2^{2^{(m-1)^2}}$,
where $m=|V(\Gamma)|$.

\subsection{Path lifting properties of immersions and our results}

A map of graphs $\phi:\Lambda\to\Gamma$
is called an \emph{immersion} (or a \emph{locally injective map})
if the restriction $\phi|_{{\operatorname{Lk}}_\Lambda(v')}$
is injective for each $v'\in V(\Lambda)$.

Notice that a map of graphs $\phi:\Lambda\to\Gamma$ is a \emph{covering}
if $\phi$ is surjective and
$\phi|_{{\operatorname{Lk}}_\Lambda(v')}:{\operatorname{Lk}}_\Lambda(v')\to {\operatorname{Lk}}_\Gamma(\phi(v'))$ is bijective
for each $v'\in V(\Lambda)$.
It is easy to see that,
for finite simplicial graphs $\Lambda$ and $\Gamma$,
a map of graphs $\phi:\Lambda\to\Gamma$ is an immersion
if and only if there is a covering $p:\tilde\Gamma\to\Gamma$
that extends $\phi$, i.e.
$\Lambda$ is an induced subgraph of $\tilde\Gamma$
such that $\phi=p|_\Lambda$.
(For example, J.~Stallings~\cite[Theorem 6.1]{Sta83} proved this
for the case when $\Gamma$ is a bouquet.
Our case can be proved similarly.)

As observed by J.~Stallings~\cite{Sta83},
immersions $\phi:\Lambda\to\Gamma$ have some of the properties
of coverings such as the unique path lifting property
and $\pi_1$-injectivity.
However, a path in $\Gamma$ may not be lifted
to a path in $\Lambda$.

In \S\ref{sec:PL}, we introduce the notions of
semi-induced paths and induced paths in $\Gamma$.
We say that a map of graphs $\phi:\Lambda\to\Gamma$ has
the \emph{semi-induced path lifting property} (SIPL) for
$F \subseteq V(\Lambda)$
if any semi-induced path in $\Gamma$ starting from $\phi(v')$
for some $v'\in F$
is lifted to a path in $\Lambda$ starting from $v'$.
The \emph{induced path lifting property} (IPL)
is defined similarly.
(See \S\ref{sec:PL} for details.)

\smallskip

\noindent\textbf{Theorem~\ref{thm:sipl2surv}.}\ \emph{Let $\phi:\Lambda\to\Gamma$ 
be an immersion between finite graphs,
and let $F$ be a finite nonempty subset of $V(\Lambda)$.
If $\phi$ has SIPL for $F$, then $\phi$ is $F$-surviving.}

\smallskip

The property SIPL plays the role of deck transformation in the Kim-Koberda construction.
Using this, we prove the following.

\smallskip

\noindent\textbf{Theorem~\ref{main:upperbd}.}\ \emph{For each graph $\Gamma$ with $m$ vertices,
there exists a tree $T$ with $|V(T)| \leqslant m2^{m-1}$ such that
$G(\Gamma)$ admits a quasi-isometric group embedding into $G(T)$.}

\smallskip

Therefore the upper bound on $|V(T)|$ is improved from a double exponential
function in $m=|V(\Gamma)|$ in~\cite{KK13b} to an exponential function.

The following theorem shows that a partial converse of
Theorem~\ref{thm:sipl2surv} holds.

\smallskip

\noindent\textbf{Theorem~\ref{thm:ipl}.}\ \emph{Let $\phi:\Lambda\to \Gamma$ be an immersion between finite graphs,
and let $F$ be a finite nonempty subset of $V(\Lambda)$.
If $\phi$ is $F$-surviving, then $\phi$ has IPL for $F$.}

\smallskip

Using this, we show that the upper bound on $|V(T)|$ in Theorem~\ref{main:upperbd}
must be at least an exponential function in $m$.

\smallskip

\noindent\textbf{Theorem~\ref{main:lowerbd}.}\ \emph{
For each integer $m\geqslant 2$, there exists a graph  $\Gamma_m$ with $m$ vertices
such that if $\phi:T\to\Gamma_m$ is an immersion of a finite tree $T$ into $\Gamma_m$
and $\phi^*:G(\Gamma_m)\to G(T)$ is injective,
then $|V(T)| \geqslant 2^{m/4}$.}

\smallskip

The graphs $\Gamma_m$ in the above theorem is
thanks to Young Soo Kwon, Sang-il Oum and Paul Seymour.
The lower bound from our original example was $|V(T)| \geqslant  2^{\sqrt m}$.

\smallskip
Let $C_m$ and $P_n$ denote the cycle and path graphs
on $m$ and $n$ vertices, respectively.
Regard $P_n$ as a subgraph of the universal cover $\tilde C_m$ of $C_m$
which is the bi-infinite path graph,
and let $\phi_{n,m}:P_n\to C_m$ be the restriction of the covering map
$\tilde C_m\to C_m$.
In \cite{CDK13}, M.{} Casals-Ruiz, A.{} Duncan and I.{} Kazachkov
showed that $\phi_{8,5}^*: G(C_5)\to G(P_8)$ is injective,
which gives a counterexample to the Weakly Chordal Conjecture in~\cite{KK13a}.
(In~\cite{CDK13}, $P_n$ denotes the path graph of length $n$,
hence it is $P_{n+1}$ in our notation.)
Using the induced path lifting property, we establish the following.

\smallskip

\noindent\textbf{Theorem~\ref{main:cdk}.}\ 
\emph{For each $m\geqslant 3$, $\phi_{n,m}^*:G(C_m)\to G(P_n)$ is injective
if and only if $n\geqslant 2m-2$.}

\smallskip

We close this section with a couple of remarks.

\begin{remark}
Let $\phi:\Lambda\to\Gamma$ be an immersion between finite graphs and $F \subseteq V(\Lambda)$.
By Theorems~\ref{thm:sipl2surv} and~\ref{thm:ipl}, we know that
\begin{quote}
$\phi$ has SIPL for $F$
$\Rightarrow$ $\phi$ is $F$-surviving
$\Rightarrow$ $\phi$ has IPL for $F$.
\end{quote}
It would be interesting to know whether the converses hold.
\end{remark}

\begin{remark}
Suppose that we are given a quasi-isometric group embedding $\phi^*:G(\Gamma)\to G(T)$ for a tree $T$.
Then, as observed in~\cite{KK13b}, $G(T)$ admits a quasi-isometric group
embedding into the pure braid group $P_n$
with $n \leqslant 4|V(T)|+2$, hence so does the original group $G(\Gamma)$.
For the embeddability into pure braid groups,
it suffices to require $T$ to be a planar graph, not necessarily a tree.
Hence we can use a planar cover $\tilde \Gamma$ rather than a universal cover,
and it would give a smaller upper bound on $|V(T)|$.
For example, if $\Gamma$ is embeddable into a M\"obius band (equivalently, in a real projective plane), then
its double cover $\tilde\Gamma$ is a planar graph.
It would be interesting to see whether the upper bound obtained by using planar cover
is substantially smaller than the one given in this paper.
\end{remark}

\section{Preliminaries}\label{sec:prelim}

For a map of graphs $\phi:\Lambda\to\Gamma$
and induced subgraphs $\Lambda_1 \leqslant \Lambda$ and $\Gamma_1 \leqslant \Gamma$
with $\phi(\Lambda_1) \subseteq \Gamma_1$,
we denote by $\phi(\Lambda_1,\Gamma_1)$
the restriction $\phi|_{\Lambda_1}:\Lambda_1\to\Gamma_1$.

As noted in \S\ref{sec:intro}, a map of graphs  $\phi:\Lambda\to\Gamma$
induces a well-defined group homomorphism
$\phi^*:G(\Gamma)\to G(\Lambda)$ defined by
$\phi^*(v)=\prod_{v'\in \phi^{-1}(v)} v'$ for $v\in V(\Gamma)$.
Abusing notation, for a word $w$ in $G(\Gamma)$, $\phi^*(w)$ denotes
the word defined by the product.
For this, we may fix a total order on $V(\Lambda)$
and write each product $\prod_{v'\in \phi^{-1}(v)} v'$
in the increasing order.

The map of graphs $\phi$ considered in~\cite{KK13b}
is a restriction of the universal cover $p:\tilde \Gamma\to\Gamma$
to a subtree $T\leqslant \tilde \Gamma$.
However, the group homomorphism $\phi^*$ can be defined
for any map of graphs $\phi:\Lambda\to\Gamma$,
and most of their arguments work for immersions or regular covers.
Hence we describe their construction
in a little more general setting.

Lemmas~\ref{lem:deck} and~\ref{lem:link-s} are the key lemmas in this section.
Corollary~\ref{cor:link-surv} is a simple version of Lemma~\ref{lem:link-s}.
The tree $T$ in~\cite{KK13b}
is constructed by repeatedly applying Lemma~\ref{lem:deck} and Corollary~\ref{cor:link-surv}.
So they are somehow implicit in the proof of Lemma 11 in~\cite{KK13b}.
Lemma~\ref{lem:link-s} is an improved version of Corollary~\ref{cor:link-surv},
which was inspired by examples in~\cite{CDK13}.

\smallskip

Let $w$ be a word in $G(\Gamma)$ representing $g \in G(\Gamma)$.
The word $w$ is \emph{reduced} if $w$ is a ``shortest'' word among all words representing $g$.
In this case, the length of $w$ is the \emph{word length} of $g$, denoted by $\Vert g\Vert$.
The \emph{support} of $w$, denoted by ${\operatorname{supp}}(w)$, is the set of all vertices $v\in V(\Gamma)$ such that
$v$ or $v^{-1}$ appears in $w$.
It is well-known that if $w'$ is another reduced word representing $g$, then
${\operatorname{supp}}(w)={\operatorname{supp}}(w')$.

Let $w$ be a (non-reduced) word in  $G(\Gamma)$.
A subword $v^{\pm1}w_1v^{\mp1}$ of $w$ is a \emph{cancellation} of $v$ in $w$
if ${\operatorname{supp}}(w_1)\cap {\operatorname{Lk}}_\Gamma(v)=\emptyset$.
If, furthermore, no letter in $w_1$ is equal to $v$ or $v^{-1}$,
it is an \emph{innermost cancellation} of $v$ in $w$.
It is known that $w$ is reduced if and only if $w$ has no (innermost) cancellation.

\begin{definition}
We say that $\phi:\Lambda\to\Gamma$ is \emph{$v'$-surviving} for $v'\in V(\Lambda)$
if for any reduced word $w$ in $G(\Gamma)$,
the word $\phi^*(w)$ has no innermost cancellation of $v'$.
We say that $\phi$ is \emph{$F$-surviving} for $F \subseteq V(\Lambda)$
if it is $v'$-surviving for each $v'\in F$.
\end{definition}

For example, let $\phi:\Lambda\to \Gamma$ be as in Figure~\ref{fig:square}.
Then $\phi^*(v_1v_4v_1^{-1})=v_1'v_1''v_4'v_1''^{-1}v_1'^{-1}$
has an innermost cancellation of $v_1'$,
hence $\phi$ is not $v_1'$-surviving.
In fact, $\phi^*:G(\Gamma)\to G(\Lambda)$ is not injective:
$w=v_1v_2v_1^{-1}v_4v_1v_2^{-1}v_1^{-1}v_4^{-1}$ is a reduced word in $G(\Gamma)$
but $\phi^*(w)$ is the identity in $G(\Lambda)$.

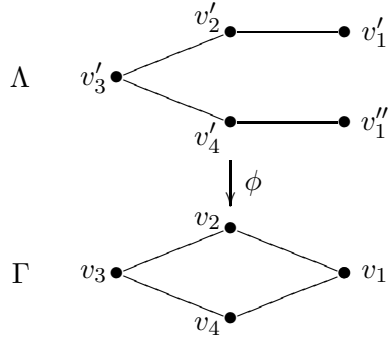
\begin{figure}
$$\begin{xy}
(30, 0) *{\bullet} ;
(15, 6) *{\bullet}  **@{-} ;
(0,  0) *{\bullet}  **@{-} ;
(15,-6) *{\bullet}  **@{-} ;
(30, 0) *{\bullet}  **@{-} ;
(34, 0) *{v_1};
(12,  7) *{v_2};
(-3, 0) *{v_3};
(12, -7) *{v_4};
(-10,0) *+!R{\Gamma};
(30,32) *{\bullet};
(15,32) *{\bullet}  **@{-};
(0, 26) *{\bullet}  **@{-};
(15,20) *{\bullet}  **@{-};
(30,20) *{\bullet}  **@{-};
(34,32) *{v_1'};
(34,20) *{v_1''};
(12,34) *{v_2'};
(12,18) *{v_4'};
(-3,26) *{v_3'};
(-10,26) *+!R{\Lambda};
(15,9); (15,15) **@{-}    ?<*@{<}   ?(0.5) *!/^3mm/{\phi};
\end{xy}
$$
\caption{$\phi$ maps $v_i'$ to $v_i$ for all $i$ and $v_1''$ to $v_1$. }
\label{fig:square}
\end{figure}

\begin{lemma}[{\cite[Lemma 9]{KK13b}}]\label{lem:KK9}
Let $\phi_1:\Lambda_1\to\Gamma$ be a map of graphs.
If $\Lambda\leqslant\Lambda_1$ and $\phi=\phi_1|_{\Lambda}$,
then ${\operatorname{supp}}(\phi^*(w)) \subseteq \operatorname{supp}(\phi_1^*(w))$ for any $w\in G(\Gamma)$.
In particular, $\ker\phi_1^* \subseteq \ker\phi^*$.
\end{lemma}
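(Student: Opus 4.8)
The plan is to realize $\phi^*$ as the composition of $\phi_1^*$ with a canonical retraction $G(\Lambda_1)\to G(\Lambda)$, and then to observe that such a retraction cannot enlarge support. First I would construct the retraction. Since $\Lambda$ is an induced subgraph of $\Lambda_1$, its complement $\Lambda^c$ is an induced subgraph of $\Lambda_1^c$ (for $u',v'\in V(\Lambda)$, adjacency in $\Lambda^c$ is equivalent to non-adjacency in $\Lambda$, hence to non-adjacency in $\Lambda_1$, hence to adjacency in $\Lambda_1^c$). Recalling that $G(\Lambda)=A(\Lambda^c)$ and $G(\Lambda_1)=A(\Lambda_1^c)$, I would let $r$ be the assignment sending each $v'\in V(\Lambda)$ to itself and each $v'\in V(\Lambda_1){\backslash}V(\Lambda)$ to the identity. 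To see that $r$ extends to a homomorphism $r:G(\Lambda_1)\to G(\Lambda)$, I would check the defining relations: a relation $[u',v']=1$ of $G(\Lambda_1)$ holds exactly when $\{u',v'\}\notin E(\Lambda_1)$, and if both endpoints lie in $V(\Lambda)$ then, $\Lambda$ being induced, $\{u',v'\}\notin E(\Lambda)$, so $[u',v']=1$ also holds in $G(\Lambda)$; if an endpoint lies outside $V(\Lambda)$ the relation becomes trivial under $r$. This well-definedness check is the only point that uses the hypothesis that $\Lambda$ is \emph{induced}, and I expect it to be the main (albeit mild) obstacle.

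Next I would verify the identity $\phi^*=r\circ\phi_1^*$ by evaluating on each generator $v\in V(\Gamma)$. Because the vertices of $\phi_1^{-1}(v)$ are pairwise non-adjacent in $\Lambda_1$, they pairwise commute in $G(\Lambda_1)$, so the order of the product is immaterial and $r$ may be applied factorwise:
$$
r(\phi_1^*(v))=r\Big(\prod_{v'\in\phi_1^{-1}(v)}v'\Big)
=\prod_{v'\in\phi_1^{-1}(v)\cap V(\Lambda)}v'
=\prod_{v'\in\phi^{-1}(v)}v'=\phi^*(v),
$$
since $r$ deletes exactly the factors lying outside $V(\Lambda)$. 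As both sides are homomorphisms agreeing on the generators, $\phi^*=r\circ\phi_1^*$ on all of $G(\Gamma)$.

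Finally I would record the support-monotonicity of $r$: for any $g\in G(\Lambda_1)$, choosing a reduced word $u$ representing $g$, the word $r(u)$ is obtained from $u$ by deleting the letters that lie in $V(\Lambda_1){\backslash}V(\Lambda)$, and any subsequent reduction only removes letters, so ${\operatorname{supp}}(r(g))\subseteq{\operatorname{supp}}(u)={\operatorname{supp}}(g)$. Applying this with $g=\phi_1^*(w)$ yields ${\operatorname{supp}}(\phi^*(w))={\operatorname{supp}}(r(\phi_1^*(w)))\subseteq{\operatorname{supp}}(\phi_1^*(w))$, which is the asserted inclusion. The kernel statement then follows immediately: if $w\in\ker\phi_1^*$, then $\phi_1^*(w)=1$ forces $\phi^*(w)=r(1)=1$, so $w\in\ker\phi^*$, giving $\ker\phi_1^*\subseteq\ker\phi^*$.
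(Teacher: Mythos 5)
Your proposal is correct and matches the paper's own argument: the retraction $r$ you construct is exactly the homomorphism $\iota^*:G(\Lambda_1)\to G(\Lambda)$ induced by the inclusion $\iota:\Lambda\to\Lambda_1$ that the paper uses, together with the factorization $\phi^*=\iota^*\circ\phi_1^*$ and the observation that killing the generators in $\Lambda_1{\backslash}\Lambda$ cannot enlarge support. Your write-up simply supplies the routine details (well-definedness of $r$ and support-monotonicity) that the paper leaves as a sketch.
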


\begin{lemma}[{\cite[Lemma 10]{KK13b}}]\label{lem:KK10}
If a map of graphs $\phi:\Lambda\to\Gamma$ is $F$-surviving for some $F \subseteq  V(\Lambda)$ with $\phi(F)=V(\Gamma)$,
then $\phi^*:G(\Gamma)\to G(\Lambda)$ is a quasi-isometric group embedding.
\end{lemma}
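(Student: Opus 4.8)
The plan is to deduce both injectivity and the quasi-isometry estimates from a single word-length inequality, namely $\Vert g\Vert_{G(\Gamma)}\le \Vert \phi^*(g)\Vert_{G(\Lambda)}$ for all $g\in G(\Gamma)$. First I would dispose of the easy Lipschitz bound: since $\phi^*$ is a homomorphism and each generator $v\in V(\Gamma)$ is sent to a word of length $|\phi^{-1}(v)|$, setting $N=\max_{v\in V(\Gamma)}|\phi^{-1}(v)|$ (finite, as $\Lambda$ is finite, and $N\geq 1$ since $\phi(F)=V(\Gamma)$) gives $\Vert\phi^*(g)\Vert\le N\Vert g\Vert$. Granting the reverse inequality $\Vert g\Vert\le\Vert\phi^*(g)\Vert$, injectivity is immediate (if $\phi^*(g)=1$ then $\Vert g\Vert\le 0$), and the two bounds, applied to $g=x_1^{-1}x_2$, combine to show that $\phi^*$ is a quasi-isometric embedding with constant $C=N$. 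So the whole statement reduces to proving $\Vert g\Vert\le\Vert\phi^*(g)\Vert$.

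For the reverse inequality I would argue by counting occurrences of generators. I use the standard fact that in a right-angled Artin group any two reduced words for the same element are related by transpositions of adjacent commuting generators, so the multiset of signed letters appearing in a reduced word is an invariant of the element; write $c_{x}(h)$ for the number of occurrences of a generator $x$ in the reduced form of $h$. Since $\phi(F)=V(\Gamma)$, fix a section $\sigma\colon V(\Gamma)\to F$ with $\phi\circ\sigma=\mathrm{id}$. Let $w$ be a reduced word of length $\ell$ representing $g$. Each occurrence of $v^{\pm1}$ in $w$ contributes exactly one letter $\sigma(v)^{\pm1}$ to the raw word $\phi^*(w)$, so the raw count of $\sigma(v)$ in $\phi^*(w)$ is $c_v(w)$. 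The key claim I would isolate is that none of these representatives cancel, i.e.\ $c_{\sigma(v)}(\phi^*(w))=c_v(w)$ for every $v\in V(\Gamma)$. Granting this, and noting that the vertices $\sigma(v)$ are distinct, summing over $v$ yields
\[
\Vert\phi^*(g)\Vert=\sum_{x\in V(\Lambda)}c_x(\phi^*(w))\geq\sum_{v\in V(\Gamma)}c_{\sigma(v)}(\phi^*(w))=\sum_{v\in V(\Gamma)}c_v(w)=\ell=\Vert g\Vert .
\]

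It remains to prove the key claim, and this is exactly where the $F$-surviving hypothesis enters (via $v'=\sigma(v)\in F$) and where I expect the real difficulty. The useful structural fact is that commuting is reflected downward: because $\phi$ is a map of graphs it preserves adjacency, so whenever $\phi(a')$ and $v$ commute in $G(\Gamma)$ the vertices $a'$ and $\sigma(v)$ commute in $G(\Lambda)$; thus any cancellation happening ``upstairs'' in $w$ lifts to one ``downstairs'' in $\phi^*(w)$. To prove the claim I would induct on $\ell$: if, when reducing $\phi^*(w)$, two letters $\sigma(v)^{\pm1}$ coming from two occurrences of $v$ in $w$ cancel, I would try to produce a reduced word $w'$ in $G(\Gamma)$ for which $\phi^*(w')$ has an honest innermost cancellation of $\sigma(v)$, contradicting the $v'$-surviving hypothesis applied to $w'$ --- here it is essential that the definition of $F$-surviving quantifies over all reduced words, not just over $w$.

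The main obstacle is that the cancellation of the two $\sigma(v)$'s in $\phi^*(w)$ may only become innermost after unrelated cancellations of generators $a'\in{\operatorname{Lk}}_\Lambda(\sigma(v))$ have been performed, and such cancellations need not correspond to cancellations available in $w$; indeed, for a general word the property ``has no innermost cancellation of $\sigma(v)$'' is not preserved under removing other innermost cancellations, as the word $\sigma(v)\,a'\,a'^{-1}\,\sigma(v)^{-1}$ shows. The point that should make the argument go through is that words of the special form $\phi^*(w)$ with $w$ reduced are highly constrained: within a single block $\phi^*(v^{\pm1})=\prod_{v''\in\phi^{-1}(v)}v''^{\pm1}$ the letters are distinct and of one sign, so an adjacency such as $a'\,a'^{-1}$ can only straddle two blocks arising from a subword $z\,z^{-1}$ of $w$, which is impossible for reduced $w$. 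Exploiting this together with the downward reflection of commuting, I would show that any reduction of $\phi^*(w)$ producing a $\sigma(v)$-cancellation can be mirrored by cancellation and commutation moves performed on $w$ itself, yielding the required reduced word $w'$ and completing the induction; carrying out this bookkeeping carefully is the technical heart of the proof.
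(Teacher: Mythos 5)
Your reduction of the lemma to the single inequality $\Vert w\Vert \leqslant \Vert \phi^*(w)\Vert$ for reduced $w$ is exactly the paper's route: the paper gives no proof of this lemma beyond citing \cite[Lemma 10]{KK13b} and recording precisely this inequality as the idea of proof. Your Lipschitz bound with $C=N=\max_v|\phi^{-1}(v)|$, the derivation of injectivity and of the quasi-isometry estimates from the two inequalities, the choice of a section $\sigma:V(\Gamma)\to F$, and the observation that non-adjacency in $\Gamma$ pulls back to non-adjacency in $\Lambda$ (so that commutation moves on $w$ induce commutation moves on $\phi^*(w)$) are all correct and routine.

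The genuine gap is the key claim $c_{\sigma(v)}(\phi^*(w))=c_v(w)$, which you never prove, and the strategy you sketch for it would not go through as described. First, since $w$ is reduced there are \emph{no} cancellation moves available on $w$, so ``mirroring any reduction of $\phi^*(w)$ by cancellation and commutation moves performed on $w$ itself'' cannot be executed: the obstructing configurations are cancelling pairs $y'^{\pm1}$ in $\phi^*(w)$ lying over two occurrences $y^{\pm1}$ in $w$ that do \emph{not} cancel in $G(\Gamma)$ (this is exactly the phenomenon of Figure~\ref{fig:square}, where $\phi^*$ of the reduced word $v_1v_2v_1^{-1}v_4v_1v_2^{-1}v_1^{-1}v_4^{-1}$ dies), and such pairs have no downstairs counterpart to mirror. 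Second, your block-adjacency observation only rules out literally adjacent inverse pairs in the unshuffled word $\phi^*(w)$, whereas reduction in a RAAG cancels letters that become adjacent only after shuffles; the statement actually needed is the pairing lemma that if two letters $x^{\epsilon},x^{-\epsilon}$ of a word cancel in some reduction, then every letter between them either commutes with $x$ or cancels with another letter between them. Applying this between the two $\sigma(v)$-letters produces, in the bad case, a cancelling pair $y'^{\pm1}$ with $y'\in\operatorname{Lk}_\Lambda(\sigma(v))$ but in general $y'\notin F$, about which the $F$-surviving hypothesis says nothing; so the recursion you propose has no base and never reaches a contradiction (the paper's definition quantifies over all reduced words, as you note, but each new cancelling pair it hands you concerns a vertex outside $F$). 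Bridging the paper's \emph{syntactic} definition of $F$-surviving (no innermost cancellation of $v'$ in the word $\phi^*(w)$, over all reduced $w$) to the \emph{semantic} statement that the designated letters survive in the reduced form of $\phi^*(w)$ is precisely the nontrivial content of \cite[Lemma 10]{KK13b}; your proposal correctly isolates this as the heart of the matter but leaves it unproved, and the bookkeeping plan offered in its place fails at the two points above.
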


The idea of proof of Lemma~\ref{lem:KK9} is as follows:
the inclusion map $\iota:\Lambda\to \Lambda_1$ induces
a homomorphism $\iota^*:G(\Lambda_1)\to G(\Lambda)$ such that
$\phi^*=\iota^*\circ\phi_1^*$ and $\iota^*$
sends the vertices in $\Lambda_1{\backslash}\Lambda$ to the identity.

The idea of proof of Lemma~\ref{lem:KK10} is as follows:
if $\phi:\Lambda\to\Gamma$ is $F$-surviving for some $F \subseteq V(\Lambda)$ with $\phi(F)=V(\Gamma)$,
then $\Vert\phi^*(w)\Vert \geqslant \Vert w\Vert$ for any reduced word $w$ in $G(\Gamma)$.

The following lemma shows that, under a certain condition,
if $\phi:\Lambda\to\Gamma$ is an immersion such that $\phi^*$ is injective,
then $\phi$ can be extended to $\phi_1:\Lambda_1\to\Gamma$
such that $\phi_1$ is $F$-surviving.

\begin{lemma}\label{lem:deck}
Let $\phi:\Lambda\to\Gamma$ be an immersion
that is a restriction of a regular cover
$p:\tilde\Gamma\to\Gamma$
to a finite induced subgraph $\Lambda$ of $\tilde\Gamma$.
Suppose that $\phi$ is surjective on the sets of vertices.
Let $F$ be a finite nonempty subset of $V(\tilde\Gamma)$,
and let $\Sigma$ be the set of all deck transformations
$\sigma:\tilde\Gamma\to\tilde\Gamma$ such that
$\sigma(\Lambda)\cap F\ne\emptyset$.
Let $\Lambda_1$ be the induced subgraph of
$\tilde\Gamma$ on
$$\bigcup_{\sigma\in\Sigma} \sigma(V(\Lambda)).$$
If $\phi^*:G(\Gamma)\to G(\Lambda)$ is injective,
then $\phi_1=p|_{\Lambda_1}:\Lambda_1\to\Gamma$ is $F$-surviving.
\end{lemma}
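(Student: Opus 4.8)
The plan is to prove that $\phi_1$ is $v'$-surviving for each $v'\in F$ one vertex at a time, by showing directly that $v'$ can never be cancelled; the hypothesis that $\phi^*$ is injective enters only through a combinatorial consequence about which edges of $\Gamma$ are covered by $\Lambda$. I would first record two preliminary facts. (i) $F\subseteq V(\Lambda_1)$: given $v'\in F$, put $v=p(v')$; since $\phi$ is surjective on vertices there is a lift $u'\in V(\Lambda)$ with $p(u')=v$, and since $p$ is a regular cover there is a deck transformation $\sigma$ with $\sigma(u')=v'$, whence $\sigma(\Lambda)\cap F\ni v'$, so $\sigma\in\Sigma$ and $v'\in\sigma(V(\Lambda))\subseteq V(\Lambda_1)$. (ii) Injectivity of $\phi^*$ forces $\phi$ to cover every edge of $\Gamma$: if some edge $\{v,c\}\in E(\Gamma)$ had no lift in $\Lambda$ (no lift of $v$ adjacent to a lift of $c$), then all lifts of $v$ would commute with all lifts of $c$ in $G(\Lambda)$, so $\phi^*(v)$ and $\phi^*(c)$ would commute, giving $[v,c]\in\ker\phi^*$; but $\{v,c\}\in E(\Gamma)$ means $[v,c]\neq 1$ in $G(\Gamma)$, contradicting injectivity.

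The key geometric step combines these. Fix $v'\in F$, set $v=p(v')$, and let $c$ be any neighbour of $v$ in $\Gamma$. By (ii) there is an edge $\{a,b\}\in E(\Lambda)$ with $p(a)=v$ and $p(b)=c$; choosing a deck transformation $\sigma$ with $\sigma(a)=v'$ (which lies in $\Sigma$ as in (i)), the vertex $\sigma(b)$ is the unique lift of $c$ adjacent to $v'$ and it lies in $\sigma(V(\Lambda))\subseteq V(\Lambda_1)$. Thus for every $c\in\operatorname{Lk}_\Gamma(v)$ the lift of $c$ adjacent to $v'$ already belongs to $\Lambda_1$. This is precisely where the construction of $\Lambda_1$ as the union of all $\Sigma$-translates does its work: it assembles a full neighbourhood of $v'$ out of translates centred at the various lifts of $v$ in $\Lambda$, so that $\operatorname{Lk}_{\Lambda_1}(v')$ surjects onto $\operatorname{Lk}_\Gamma(v)$.

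Finally I would argue by contradiction. Suppose $\phi_1^*(w)$ has an innermost cancellation of $v'$ for some reduced $w$, chosen of minimal length. Minimality together with the definition of innermost cancellation lets me normalize $w=v^{\epsilon}w_0v^{-\epsilon}$, where the two displayed letters furnish the cancelling pair and $w_0$ contains no occurrence of $v$; the cancellation condition then reads $\operatorname{supp}(\phi_1^*(w_0))\cap\operatorname{Lk}_{\Lambda_1}(v')=\emptyset$, while reducedness of $w$ forces $w_0$ not to commute with $v$, that is $\operatorname{supp}(w_0)\cap\operatorname{Lk}_\Gamma(v)\neq\emptyset$. Picking $c$ in this last intersection, the key step puts the lift of $c$ adjacent to $v'$ inside $V(\Lambda_1)$; being a lift of $c\in\operatorname{supp}(w_0)$ it belongs to $\operatorname{supp}(\phi_1^*(w_0))$, and being adjacent to $v'$ it belongs to $\operatorname{Lk}_{\Lambda_1}(v')$, contradicting the emptiness just displayed. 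Hence no such cancellation exists and $\phi_1$ is $F$-surviving.

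I expect the main obstacle to be the middle step: converting the purely algebraic hypothesis (injectivity of $\phi^*$) into the combinatorial statement that, around each $v'\in F$, the subgraph $\Lambda_1$ already contains the entire link of $v'$ predicted by $\operatorname{Lk}_\Gamma(p(v'))$ — equivalently, that the deck-translate union has saturated the neighbourhood of every point of $F$. Getting the lift-and-deck bookkeeping exactly right, and justifying the reduction to the normal form $v^{\epsilon}w_0v^{-\epsilon}$ (in particular that for a minimal $w$ the cancelling pair may be taken to be the extreme letters and that $w_0$ is free of $v$), is the part that will need the most care.
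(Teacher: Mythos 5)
Your proof is correct, and it takes a genuinely different route from the paper's. The paper argues by contradiction directly on the offending word: from a cancellation subword $v^{\pm1}w_1v^{\mp1}$ of a reduced $w$, reducedness gives $v^{\pm1}w_1v^{\mp1}\neq w_1$ in $G(\Gamma)$, so injectivity of $\phi^*$ forces some lift $v''\in\phi^{-1}(v)\subseteq V(\Lambda)$ to survive in $\operatorname{supp}(\phi^*(v^{\pm1}w_1v^{\mp1}))$, which yields a neighbour $x''\in\operatorname{supp}(\phi^*(w_1))\cap\operatorname{Lk}_\Lambda(v'')$; a deck transformation carrying $v''$ to $v'$ together with Lemma~\ref{lem:KK9} then transports $x''$ into $\operatorname{supp}(\phi_1^*(w_1))\cap\operatorname{Lk}_{\Lambda_1}(v')$, a contradiction. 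You instead use injectivity once and for all, only through the commutators $[v,c]$ along edges of $\Gamma$ (a strictly weaker consequence of injectivity than the paper invokes), to show every edge of $\Gamma$ is covered by an edge of $\Lambda$; deck-translating these edge lifts shows $\Lambda_1$ contains a full link over every $v'\in F$, and the contradiction is then immediate at the level of letters: since $\operatorname{supp}$ in the paper's definition of cancellation refers to the letters literally occurring in the subword, the word $\phi_1^*(w_0)$ contains \emph{every} $\Lambda_1$-lift of every letter of $w_0$, in particular the lift adjacent to $v'$ of any $c\in\operatorname{supp}(w_0)\cap\operatorname{Lk}_\Gamma(v)$. This buys you a cleaner modular structure (the intermediate statement ``$\phi^*$ injective $\Rightarrow$ $\phi_1(\operatorname{Lk}_{\Lambda_1}(v'))=\operatorname{Lk}_\Gamma(v)$ for $v'\in F$'' is of independent interest) and avoids both Lemma~\ref{lem:KK9} and the analysis of supports of conjugates in a RAAG; the paper's argument, on the other hand, never needs to know that whole edges lift, only that the particular word $w_1$ leaves a trace next to some lift of $v$. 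Two small remarks: your appeal to minimality of $w$ is unnecessary --- passing to the subword $v^{\epsilon}w_0v^{-\epsilon}$, which is itself reduced and whose image still exhibits an innermost cancellation of $v'$, is all that is needed, and is what the paper does; and the adjacency of $\sigma(b)$ to $v'$ \emph{in $\Lambda_1$} (not merely in $\tilde\Gamma$) deserves an explicit word, though it is immediate since $\Lambda_1$ is by definition an induced subgraph of $\tilde\Gamma$ containing both endpoints.
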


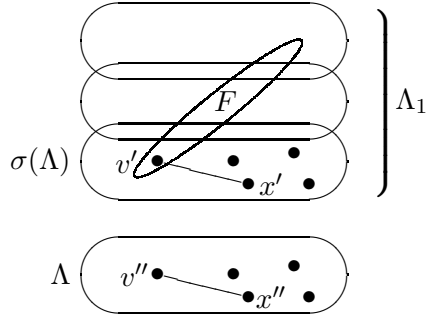
\begin{figure}
$$
\begin{xy}
( 0, 0) *{\bullet}; (12,-3) *{\bullet}  **@{-} ;
(10, 0) *{\bullet};
(18, 1) *{\bullet};
(20,-3) *{\bullet};
(-3, 0) *{v''};
(15,-3) *{x''};
(-10, -5); (25,5) **\frm<20pt>{-};
(-10,0) *+!R{\Lambda};
( 0, 15) *{\bullet}; (12, 12) *{\bullet} **@{-};
(10, 15) *{\bullet};
(18, 16) *{\bullet};
(20, 12) *{\bullet};
(-4, 15) *{v'};
(15, 12) *{x'};
(-10,15) *+!R{\sigma(\Lambda)};
(-3, 13); (8,22), {\ellipse<,2mm>{-}};
( 9, 23) *+{F};
(-10, 10); (25,20) **\frm<20pt>{-};
(-10, 18); (25,28) **\frm<20pt>{-};
(-10, 26); (25,36) *{}, **\frm<20pt>{-};
(30,11);(30,35) **\frm{)};
(30,23) *+!L{\txt{$\Lambda_1$}};
\end{xy}
$$
\caption{The deck transformation $\sigma$ sends
$v''$ and $x''$ to $v'$ and $x'$, respectively.}
\label{fig:deck}
\end{figure}

\begin{proof}
Notice that $F \subseteq  V(\Lambda_1)$
because $\phi$ is surjective and $p$ is a regular cover.
Assume that $\phi_1$ is not $v'$-surviving for some $v'\in F$.
Let $v=\phi_1(v')$.
See Figure~\ref{fig:deck}.
Let $w$ be a nontrivial reduced word in $G(\Gamma)$
such that $\phi_1^*(w)$ has an innermost cancellation of $v'$.
Then $w$ has a subword of the form
$$v^{\pm 1}w_1v^{\mp1}$$
such that $w_1$ is a word in $G(\Gamma{\backslash} v)$
with ${\operatorname{supp}}(\phi_1^*(w_1))\cap{\operatorname{Lk}}_{\Lambda_1}(v')=\emptyset$.

Since $\phi^*$ is injective, $\phi^*(v^{\pm 1}w_1v^{\mp1})\ne\phi^*(w_1)$.
Hence there exists $v''\in \phi^{-1}(v) \subseteq V(\Lambda)$ such that
$v''\in{\operatorname{supp}}(\phi^*(v^{\pm 1}w_1v^{\mp1}))$,
thus there exists
$$x''\in{\operatorname{supp}}(\phi^*(w_1))\cap{\operatorname{Lk}}_\Lambda(v'').$$
Because $p:\tilde\Gamma\to\Gamma$ is a regular cover, there is a
deck transformation $\sigma$ such that $\sigma(v'')=v'$.
(Here, $\sigma\in\Sigma$ because $v'=\sigma(v'')\in \sigma(\Lambda)\cap F$.)
Let $x'=\sigma(x'')$.
By Lemma~\ref{lem:KK9},
\begin{align*}
x'=\sigma(x'')
&\in \sigma({\operatorname{supp}}(\phi^*(w_1)))\cap {\operatorname{Lk}}_{\sigma(\Lambda)}(\sigma(v''))\\
&= {\operatorname{supp}}((\phi\circ\sigma^{-1})^*(w_1))\cap {\operatorname{Lk}}_{\sigma(\Lambda)}(v')\\
& \subseteq  \operatorname{supp}(\phi_1^*(w_1))\cap {\operatorname{Lk}}_{\Lambda_1}(v'),
\end{align*}
which contradicts ${\operatorname{supp}}(\phi_1^*(w_1))\cap{\operatorname{Lk}}_{\Lambda_1}(v')=\emptyset$.
\end{proof}

\begin{remark}\label{rmk:deck-size}
In Lemma~\ref{lem:deck}, if $\Gamma$ is connected, then
$$|\Sigma| \leqslant |V(\Lambda)|\cdot
\max \left\{\, |p^{-1}(v)\cap F|:v\in V(\Gamma) \, \right\}
$$
because a deck transformation $\sigma\in\Sigma$ is uniquely determined by a vertex
$v'\in V(\Lambda)$ and its image $\sigma(v')\in p^{-1}(v)\cap F$, where $v=p(v')$.
In particular, if $|p^{-1}(v)\cap F| \leqslant 1$ for all $v\in V(\Gamma)$,
then $|\Sigma| \leqslant |V(\Lambda)|$, hence
$$|V(\Lambda_1)| \leqslant |\Sigma|\cdot|V(\Lambda)| \leqslant |V(\Lambda)|^2.$$
\end{remark}

\begin{lemma}\label{lem:link-s}
Let $\phi:\Lambda\to\Gamma$ be an immersion.
Let $v'$ be a vertex of $\Lambda$ such that $\phi({\operatorname{Lk}}_\Lambda(v'))={\operatorname{Lk}}_\Gamma(v)$,
where $v=\phi(v')$.
Let ${\operatorname{Lk}}_\Gamma(v)=\{x_1,\ldots,x_l\}$ and ${\operatorname{Lk}}_\Lambda(v')=\{x_1',\ldots,x_l'\}$
such that $\phi(x_i')=x_i$ for  $1 \leqslant i \leqslant l$.
Let
\begin{align*}
\Gamma_i  &=\Gamma{\backslash} \{v,x_1,\ldots,x_{i-1}\},\\
\Lambda_i &=\Lambda{\backslash} \phi^{-1}(\{v,x_1,\ldots,x_{i-1}\}),\\
\phi_i    &=\phi(\Lambda_i,\Gamma_i):\Lambda_i\to\Gamma_i
\end{align*}
for $1 \leqslant i \leqslant l$.
If each $\phi_i$ is $x_i'$-surviving, then $\phi$ is $v'$-surviving.
Furthermore, if $\phi_1^*:G(\Gamma_1)\to G(\Lambda_1)$ is injective,
then so is $\phi^*:G(\Gamma)\to G(\Lambda)$.
\end{lemma}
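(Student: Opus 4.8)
The two assertions require genuinely different arguments, so I would prove them separately.

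\emph{The $v'$-surviving statement.} I would argue by contradiction. Suppose $\phi$ is not $v'$-surviving, witnessed by a reduced word $w$ in $G(\Gamma)$ whose image $\phi^*(w)$ contains an innermost cancellation of $v'$, i.e.\ a subword $v'^{\pm1}W_1v'^{\mp1}$ with no $v'$ inside $W_1$ and $\operatorname{supp}(W_1)\cap\operatorname{Lk}_\Lambda(v')=\emptyset$. Since $v'$ can occur in $\phi^*$ only inside the blocks $\phi^*(v^{\pm1})$, these two occurrences of $v'$ come from two letters $v^{\pm1}$ of $w$; the innermost condition forces the subword $w_1$ of $w$ lying strictly between them to contain no $v$, and $\phi^*(w_1)$ is a literal subword of $W_1$, so $\operatorname{supp}(\phi^*(w_1))\cap\operatorname{Lk}_\Lambda(v')=\emptyset$. (The remaining letters of $W_1$, coming from other preimages of $v$, automatically avoid $\operatorname{Lk}_\Lambda(v')$, since preimages of a common vertex are non-adjacent; so they do not interfere.)

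Next I would use reducedness of $w$ to locate the correct deletion. Because $w$ is reduced, the subword $v^{\pm1}w_1v^{\mp1}$ is not itself a cancellation of $v$, so $\operatorname{supp}(w_1)$ meets $\operatorname{Lk}_\Gamma(v)$; let $i$ be the least index with $x_i\in\operatorname{supp}(w_1)$. Then $w_1$ avoids $v,x_1,\dots,x_{i-1}$, so it is a word in $G(\Gamma_i)$, reduced there (an induced subgraph carries the same commutations), and on words of $G(\Gamma_i)$ the maps $\phi_i^*$ and $\phi^*$ coincide letter for letter. Now $x_i$ occurs in $w_1$, hence $x_i'\in\phi^{-1}(x_i)$ occurs in $\phi_i^*(w_1)=\phi^*(w_1)$; but $x_i'\in\operatorname{Lk}_\Lambda(v')$, contradicting $\operatorname{supp}(\phi^*(w_1))\cap\operatorname{Lk}_\Lambda(v')=\emptyset$. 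Read through the hypothesis, $x_i'$ would have to vanish from $\phi_i^*(w_1)$ for the reduced word $w_1$, which is exactly what $\phi_i$ being $x_i'$-surviving forbids. I expect this half to be routine.

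\emph{The injectivity statement.} Here I would first peel off $v$ by retractions. Killing $v$ gives $r\colon G(\Gamma)\to G(\Gamma_1)$, killing $\phi^{-1}(v)$ gives $\rho\colon G(\Lambda)\to G(\Lambda_1)$, and a check on generators gives $\rho\circ\phi^*=\phi_1^*\circ r$. If $\phi^*(w)=1$ then $\phi_1^*(r(w))=1$, so injectivity of $\phi_1^*$ forces $r(w)=1$; thus $\ker\phi^*\subseteq\ker r=\langle\langle v\rangle\rangle$, and it remains to show that a nonempty reduced $w$ with $v\in\operatorname{supp}(w)$ cannot die. For this I would use the amalgamated decomposition $G(\Gamma)=G(\Gamma_1)*_H(H\times\langle v\rangle)$, where $H$ is generated by the vertices commuting with $v$. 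Writing $V=\phi^*(v)=\prod_{v''\in\phi^{-1}(v)}v''$, one has $V$ of infinite order, $\langle V\rangle\cap\operatorname{im}\phi_1^*=1$, and $V$ commuting with $\phi_1^*(H)$ (preimages of vertices non-adjacent to $v$ are non-adjacent to every preimage of $v$). Hence $\langle\operatorname{im}\phi_1^*,V\rangle$ is a quotient of $\operatorname{im}\phi_1^**_{\phi_1^*(H)}(\phi_1^*(H)\times\langle V\rangle)$, and once this amalgam is \emph{honest} the normal form theorem closes the argument: a reduced amalgam expression of $w$ has internal $G(\Gamma_1)$-syllables $g_j\notin H$, so $\phi_1^*(g_j)\notin\phi_1^*(H)$ by injectivity, whence $\phi^*(w)$ is amalgam-reduced and nontrivial.

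\emph{The main obstacle.} The delicate step I expect to be hardest is proving honesty of that amalgam, namely that $\phi_1^*(g)$ commutes with $V$ only when $g\in H$. Equivalently, one must show that the $\operatorname{Lk}_\Gamma(v)$-content of $g$ persists, as $\operatorname{Lk}_\Lambda(v')$-content, into the \emph{reduced} form of $\phi_1^*(g)$, so that $V$ is genuinely trapped and its preimages of $v$ cannot be shuffled away. This persistence is not a formal consequence of injectivity of $\phi_1^*$ by itself, and it is precisely the point at which the hypotheses that the $\phi_i$ are $x_i'$-surviving must be re-used; making the trapping uniform across all $v$-syllables, presumably by induction on their number combined with a retraction that isolates $v'$ from the remaining preimages of $v$, is the technical heart of the proof.
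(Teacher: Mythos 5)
Your proof of the first assertion is correct and is essentially the paper's own argument: extract a subword $v^{\pm1}w_1v^{\mp1}$ of $w$ with $\operatorname{supp}(\phi^*(w_1))\cap\operatorname{Lk}_\Lambda(v')=\emptyset$, take the least $i$ with $x_i\in\operatorname{supp}(w_1)\cap\operatorname{Lk}_\Gamma(v)$ (this set is nonempty because $w$ is reduced), observe that $w_1$ is then a reduced word in $G(\Gamma_i)$ with $\phi^*(w_1)=\phi_i^*(w_1)$, and contradict $x_i'$-survival. Your bookkeeping about which letters of $W_1$ come from other preimages of $v$ is correct and matches what the paper leaves implicit.

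The second assertion is where the proposal has a genuine gap, and it is one you name yourself: the ``honesty of the amalgam'' is never proved, so the argument is incomplete as written. Worse, honesty in the sense you formulate it (the centralizer of $V$ in $\operatorname{im}\phi_1^*$ meets $\operatorname{im}\phi_1^*$ exactly in $\phi_1^*(H)$) would not by itself close the argument: to invoke the normal form theorem you need the natural surjection from the abstract amalgam $\operatorname{im}\phi_1^* *_{\phi_1^*(H)}\bigl(\phi_1^*(H)\times\langle V\rangle\bigr)$ onto $\langle\operatorname{im}\phi_1^*,V\rangle\leqslant G(\Lambda)$ to be injective, and that is not a formal consequence of the centralizer condition --- proving it amounts to showing that no alternating word $\phi_1^*(g_0)V^{n_1}\phi_1^*(g_1)\cdots$ dies in $G(\Lambda)$, which is a restatement of the very nontriviality you are after. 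The deeper miss is that none of this machinery is needed: the first assertion already settles the case $v\in\operatorname{supp}(w)$. If a letter occurs in a word but not in its reduced form, the word must contain an innermost cancellation of that letter; so for a nontrivial reduced $w$ with $v\in\operatorname{supp}(w)$, the word $\phi^*(w)$ contains $v'^{\pm1}$, and $v'$-survival (just established) forbids any innermost cancellation of $v'$, whence $v'$ survives into the reduced form and $\phi^*(w)\neq1$. Combined with your (correct) retraction computation handling the case $v\notin\operatorname{supp}(w)$ --- equivalently the paper's observation that such a $w$ lies in $G(\Gamma_1)$, where $\phi^*(w)=\phi_1^*(w)$ is nontrivial in $G(\Lambda_1)$, and $G(\Lambda_1)$ embeds in $G(\Lambda)$ because $\Lambda_1$ is an induced subgraph --- this two-case observation is the paper's entire proof of the furthermore clause. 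The ``technical heart'' you anticipate does not exist.
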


\begin{figure}
$$\begin{xy}
( 0, 0) *{\bullet}; (10,-3) *{\bullet} **@{-} ;
( 0, 0) *{\bullet}; (20,-1) *{\bullet} **@{-} ;
( 0, 0) *{\bullet}; (25, 1) *{\bullet} **@{-} ;
( 0, 0) *{\bullet}; (35, 4) *{\bullet} **@{-} ;
(-3, 0) *{v};
(11,-5) *{x_1};
(14,-2.6) *{\cdot};(15,-2.3) *{\cdot};(16,-2) *{\cdot};
(29, 2) *{\cdot};(30,2.3) *{\cdot};(31, 2.6) *{\cdot};
(20,-3) *{x_{i-1}};
(26,-1) *{x_i};
(36, 2) *{x_l};
(40,-2) *{\bullet};
(45,0)  *{\bullet};
(23.5, -7); (50,7) **\frm<8pt>{-};
(50, 0) *+!L{\txt{$\Gamma_i$}};
( 0, 25) *{\bullet}; (10, 20) *{\bullet} **@{-} ;
( 0, 25) *{\bullet}; (20, 23) *{\bullet} **@{-} ;
( 0, 25) *{\bullet}; (25, 25) *{\bullet} **@{-} ;
( 0, 25) *{\bullet}; (35, 30) *{\bullet} **@{-} ;
(-3, 25) *{v'};
(11, 18) *{x_1'};
(20, 21) *{x_{i-1}'};
(26, 23) *{x_i'};
(36, 28) *{x_l'};
( 0, 30) *{\bullet}; ( 0, 33) *{\bullet};
(10, 31) *{\bullet}; (10, 34) *{\bullet};
(20, 32) *{\bullet}; (20, 35) *{\bullet};
(25, 33) *{\bullet}; (25, 36) *{\bullet};
(35, 18) *{\bullet}; (35, 21) *{\bullet};
(40, 19) *{\bullet}; (40, 22) *{\bullet};
(45, 21) *{\bullet}; (45, 24) *{\bullet};
(23.5, 16); (50,38) **\frm<8pt>{-};
(50, 25) *+!L{\txt{$\Lambda_i$}};
(14, 21) *{\cdot};(15,21.3) *{\cdot};(16,21.6) *{\cdot};
(29, 27) *{\cdot};(30,27.3) *{\cdot};(31, 27.6) *{\cdot};
(30,8); (30,15) **@{-}    ?<*@{<}   ?(0.5) *!/^3mm/{\phi_i};
\end{xy}
$$
\caption{The map of graphs $\phi_i$}
\label{fig:link}
\end{figure}
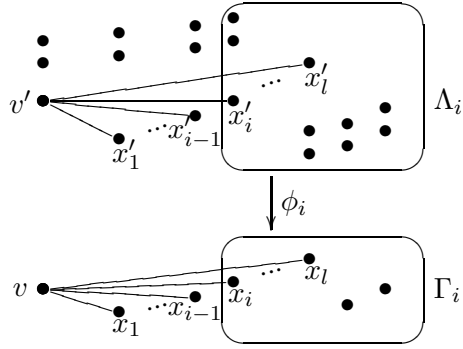

\begin{proof}
See Figure~\ref{fig:link}.
Assume that $\phi$ is not $v'$-surviving.
Then there exists a nontrivial reduced word $w$ in $G(\Gamma)$
with an innermost cancellation of $v'$ in $\phi^*(w)$,
hence there is a subword of $w$ of the form
$$
v^{\pm 1}w_1v^{\mp1}
$$
such that $w_1$ is a word in $G(\Gamma{\backslash} v)$ with
${\operatorname{supp}}(\phi^*(w_1))\cap{\operatorname{Lk}}_\Lambda(v')=\emptyset$.

Since $w$ is reduced, ${\operatorname{supp}}(w_1)\cap{\operatorname{Lk}}_\Gamma(v)$ is nonempty.
Let
$$i=\min\{k: x_k\in {\operatorname{supp}}(w_1)\cap{\operatorname{Lk}}_\Gamma(v)\}.$$
Then $w_1$ is a reduced word in $G(\Gamma_i)$,
hence $\phi^*(w_1)=\phi_i^*(w_1)$ and
${\operatorname{supp}}(\phi^*(w_1))={\operatorname{supp}}(\phi_i^*(w_1))$.

Since $x_i\in {\operatorname{supp}}(w_1)$ and $\phi_i$ is $x_i'$-surviving, $x_i'\in{\operatorname{supp}}(\phi_i^*(w_1))={\operatorname{supp}}(\phi^*(w_1))$.
Therefore $x_i'\in{\operatorname{supp}}(\phi^*(w_1))\cap {\operatorname{Lk}}_\Lambda(v')$,
which contradicts ${\operatorname{supp}}(\phi^*(w_1))\cap{\operatorname{Lk}}_\Lambda(v')=\emptyset$.
Therefore $\phi$ is $v'$-surviving.

Now, suppose that $\phi_1^*$ is injective.
Let $w$ be a nontrivial reduced word in $G(\Gamma)$.
If $v\not\in{\operatorname{supp}}(w)$, then $w$ belongs to $G(\Gamma_1)=G(\Gamma{\backslash} v)$,
hence $\phi^*(w)=\phi_1^*(w)$ is nontrivial in $G(\Lambda_1)$.
Since $\Lambda_1$ is an induced subgraph of $\Lambda$, $G(\Lambda_1)$ embeds in $G(\Lambda)$,
hence $\phi^*(w)$ is nontrivial in $G(\Lambda)$.
If $v\in{\operatorname{supp}}(w)$, then $\phi^*(w)$ is nontrivial because
$\phi$ is $v'$-surviving.
Therefore $\phi^*$ is injective.
\end{proof}

In the above lemma, observe that, for each $1 \leqslant i \leqslant l$ and a reduced word
$w$ in $G(\Gamma_i)$, if $\phi_i^*(w)$ has a cancellation of $x_i'$ then
so does $\phi_1^*(w)$.
Thus if $\phi_1$ is $x_i'$-surviving, then so is $\phi_i$.
Hence we have the following simple version.

\begin{corollary}\label{cor:link-surv}
Let $\phi:\Lambda\to\Gamma$ be an immersion.
Let $v'$ be a vertex of $\Lambda$ such that $\phi({\operatorname{Lk}}_\Lambda(v'))={\operatorname{Lk}}_\Gamma(v)$,
where $v=\phi(v')$. Let
$$
\Gamma_1  =\Gamma{\backslash} v,\quad
\Lambda_1 =\Lambda{\backslash} \phi^{-1}(v),\quad
\phi_1    =\phi(\Lambda_1,\Gamma_1):\Lambda_1\to\Gamma_1.
$$
If $\phi_1$ is ${\operatorname{Lk}}_\Lambda(v')$-surviving, then $\phi$ is $v'$-surviving.
Furthermore, if  $\phi_1^*:G(\Gamma_1)\to G(\Lambda_1)$
is injective, then so is $\phi^*:G(\Gamma)\to G(\Lambda)$.
\end{corollary}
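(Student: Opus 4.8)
The plan is to deduce this corollary directly from Lemma~\ref{lem:link-s} together with the intermediate observation recorded immediately before the corollary's statement. The key point is that Corollary~\ref{cor:link-surv} is the special, more uniform version of Lemma~\ref{lem:link-s}: instead of asking each restricted map $\phi_i$ to be $x_i'$-surviving, we only assume that the single map $\phi_1 = \phi(\Lambda_1,\Gamma_1)$ is ${\operatorname{Lk}}_\Lambda(v')$-surviving, i.e.\ $x_i'$-surviving for every $i$. So the whole game is to show this stronger-looking hypothesis on $\phi_1$ implies the list of hypotheses on the $\phi_i$ that Lemma~\ref{lem:link-s} requires, and then invoke that lemma verbatim.

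First I would fix the notation from Lemma~\ref{lem:link-s}: write ${\operatorname{Lk}}_\Gamma(v)=\{x_1,\dots,x_l\}$ and ${\operatorname{Lk}}_\Lambda(v')=\{x_1',\dots,x_l'\}$ with $\phi(x_i')=x_i$, and set $\Gamma_i = \Gamma{\backslash}\{v,x_1,\dots,x_{i-1}\}$, $\Lambda_i = \Lambda{\backslash}\phi^{-1}(\{v,x_1,\dots,x_{i-1}\})$, and $\phi_i = \phi(\Lambda_i,\Gamma_i)$. Note that $\Gamma_1 = \Gamma{\backslash} v$ and $\Lambda_1 = \Lambda{\backslash}\phi^{-1}(v)$ agree with the definitions in the corollary. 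The crucial step is to verify, for each $i$, that \emph{$\phi_1$ being $x_i'$-surviving forces $\phi_i$ to be $x_i'$-surviving}. This is exactly the displayed observation preceding the corollary: since $\Gamma_i \leqslant \Gamma_1$ and $\Lambda_i \leqslant \Lambda_1$, any reduced word $w$ in $G(\Gamma_i)$ is also a reduced word in $G(\Gamma_1)$, and by Lemma~\ref{lem:KK9} applied to the inclusion $\Lambda_i \hookrightarrow \Lambda_1$ we have ${\operatorname{supp}}(\phi_i^*(w)) \subseteq {\operatorname{supp}}(\phi_1^*(w))$; tracing cancellations through this support containment shows that a cancellation of $x_i'$ in $\phi_i^*(w)$ would produce one in $\phi_1^*(w)$. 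Hence if $\phi_1$ is $x_i'$-surviving, so is $\phi_i$.

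With this verified for all $i$, the hypothesis ``$\phi_1$ is ${\operatorname{Lk}}_\Lambda(v')$-surviving'' yields that every $\phi_i$ is $x_i'$-surviving, which is precisely the hypothesis of Lemma~\ref{lem:link-s}. Applying that lemma gives immediately that $\phi$ is $v'$-surviving, and moreover that injectivity of $\phi_1^*:G(\Gamma_1)\to G(\Lambda_1)$ implies injectivity of $\phi^*:G(\Gamma)\to G(\Lambda)$. Both conclusions of the corollary follow.

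The main obstacle, such as it is, lies in the support-tracing argument of the middle step: one must confirm carefully that a cancellation is genuinely detectable at the level of supports, so that the inclusion ${\operatorname{supp}}(\phi_i^*(w)) \subseteq {\operatorname{supp}}(\phi_1^*(w))$ really transfers the existence of a cancellation of $x_i'$ upward from $\phi_i^*(w)$ to $\phi_1^*(w)$. Since this is exactly the content of the paragraph between Lemma~\ref{lem:link-s} and the corollary, the proof is essentially a short citation of that observation followed by an appeal to Lemma~\ref{lem:link-s}; no new computation beyond what the preceding results supply should be needed.
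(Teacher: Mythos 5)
Your proposal is correct and follows essentially the same route as the paper: the corollary is obtained exactly from the observation stated just before it (that $\phi_1$ being $x_i'$-surviving forces each $\phi_i$ to be $x_i'$-surviving) together with a direct appeal to Lemma~\ref{lem:link-s}, which is all the paper does. One remark on your middle step: the support containment from Lemma~\ref{lem:KK9} alone does not transfer cancellations, but the transfer is in fact immediate because for a word $w$ in $G(\Gamma_i)$ the preimage $\phi^{-1}(V(\Gamma_i))$ is disjoint from $\phi^{-1}(\{v,x_1,\ldots,x_{i-1}\})$, so $\phi_i^*(w)$ and $\phi_1^*(w)$ coincide as words and ${\operatorname{Lk}}_{\Lambda_1}(x_i')$ differs from ${\operatorname{Lk}}_{\Lambda_i}(x_i')$ only by vertices that cannot occur in that word.
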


\section{Path lifting properties and embedding between RAAGs}
\label{sec:PL}

In this section we introduce the notions of
``induced path lifting property'' and ``semi-induced path lifting property''
for immersions, and apply them to embedability between RAAGs.
This will improve some results of Kim and Koberda in~\cite{KK13b}
and Casals-Ruiz, Duncan and Kazachkov in~\cite{CDK13}.

Throughout this section, $\phi:\Lambda\to\Gamma$
is assumed to be an immersion between finite graphs.

\subsection{Path lifting properties}

\begin{definition}
A \emph{path} in a graph $\Gamma$ is a tuple $\alpha=(v_0,v_1,\ldots,v_k)$
of vertices of $\Gamma$ such that $\{v_i,v_{i+1}\}\in E(\Gamma)$
for $0 \leqslant i \leqslant k-1$ and $v_i\ne v_j$ if $i\ne j$.
(In particular, $\alpha$ is not a loop because $v_0\ne v_k$.)
\end{definition}

\begin{definition}
A path $\alpha=(v_0,v_1,\ldots,v_k)$ in $\Gamma$ is an \emph{induced path}
if $\{v_i,v_j\}\not\in E(\Gamma)$ whenever $j \geqslant  i+2$.
(In other words, $\alpha$ is an induced path
if and only if the induced subgraph of $\Gamma$
on the vertices of $\alpha$ is a path graph.)
\end{definition}

\begin{definition}
Suppose that a total order $\prec$ is given on $V(\Gamma)$.
A path $\alpha=(v_0,v_1,\ldots,v_k)$ in $\Gamma$ is
a \emph{semi-induced path} with respect to $\prec$ if $\{v_i,v_j\}\not\in E(\Gamma)$
whenever $j \geqslant  i+2$ and $v_j\prec v_{i+1}$.
\end{definition}

If there is no confusion, we will not refer to the total order on $V(\Gamma)$,
assuming that some order is given.
In particular, we omit the term ``with respect to $\prec$'' for simplicity.

\medskip

A path with one or two vertices is both induced and semi-induced.
Notice that a path $\alpha=(v_0,v_1,\ldots,v_k)$ is an induced path
if, for each $j\geqslant 2$,
$$v_j\not\in\bigcup_{i=0}^{j-2}{\operatorname{Lk}}_\Gamma(v_i)$$
and it is a semi-induced path
if, for each $j\geqslant 2$,
$$v_j\not\in\bigcup_{i=0}^{j-2}
\{v\in{\operatorname{Lk}}_\Gamma(v_i):v\prec v_{i+1}\}.
$$

Let $\phi:\Lambda\to\Gamma$ be an immersion.
A path $\tilde\alpha=(v_0',v_1',\ldots,v_k')$ in $\Lambda$
is called a \emph{lift} of the path
$\alpha=(v_0,v_1,\ldots,v_k)$ in $\Gamma$
if $\phi(v_i')=v_i$ for $0 \leqslant i \leqslant k$.
Notice that the lift $\tilde\alpha$, if exists,
is uniquely determined
by $\alpha$ and $v_0'$
due to the unique path lifting property
for immersions between graphs~\cite{Sta83}.

\begin{definition}
Let $\phi:\Lambda\to\Gamma$ be an immersion,
and let $F \subseteq  V(\Lambda)$.
We say that $\phi$ has
the \emph{induced path lifting property (IPL) for $F$}
(resp.\ \emph{semi-induced path lifting property (SIPL) for $F$})
if for any $v'\in F$ and for any induced path
(resp.\ semi-induced path) $\alpha$ in $\Gamma$
starting from $\phi(v')$,
there is a lift of $\alpha$ starting from $v'$.
\end{definition}

It is obvious from the definitions that if $\phi$ has SIPL for $F$,
then $\phi$ has IPL for $F$.

\begin{figure}
\begin{tabular}{*7c}
$\begin{xy}
( 0, 0) *{\bullet};
( 0, 0); (10,-7) *{\bullet} **@{-}; (25,-7) *{\bullet} **@{-}; (25, 7) *{\bullet} **@{-};
( 0, 0); (10, 7) *{\bullet} **@{-}; (25, 7) *{\bullet} **@{-};
(-3, 0) *{v_0};
(10,-10) *{v_1}; (25,-10) *{v_2};
(10, 10) *{v_4}; (25, 10) *{v_3};
\end{xy}$
&\qquad\qquad\qquad&
$\begin{xy}
( 0, 0) *{\bullet};
( 0, 0); (10,-5) *{\bullet} **@{-}; (25,-5) *{\bullet} **@{-};
(25, 5) *{\bullet} **@{-}; (10, 5) *{\bullet} **@{-};
( 0, 0); (10, 10) *{\bullet} **@{-}; (30, 10) *{\bullet} **@{-}; (30, -8) *{\bullet} **@{-};
(-3, 0) *{v_0'};
(10,-8) *{v_1'}; (25,-8) *{v_2'}; (9, 2) *{v_4'}; (23, 2) *{v_3'};
(7,12) *{v_4''}; (34, 10) *{v_3''}; (34, -8) *{v_2''};
\end{xy}$\\[3em]
(a) Cycle graph $C_5$ &&
(b) Path graph $P_8$
\end{tabular}
\caption{The map $\phi:P_8\to C_5$ sends $v_i'$ and $v_i''$
to $v_i$ $(0 \leqslant i \leqslant 4)$.}
\label{fig:C5}
\end{figure}

\begin{example}\label{eg:cdk}
Let $C_5$ be the cycle graph on five vertices as in Figure~\ref{fig:C5}(a).
Then $\alpha_1=(v_0,v_1,v_2,v_3,v_4)$ and $\alpha_2=(v_0,v_4,v_3,v_2,v_1)$
are maximal paths starting from $v_0$,
but they are not induced paths because $\{v_0,v_4\}$ and $\{v_0,v_1\}$ are edges of $C_5$.
The paths $(v_0,v_1,v_2,v_3)$ and $(v_0,v_4,v_3,v_2)$ are
the only maximal induced paths starting from $v_0$.

Fix a total order on $V(C_5)$,
say, $v_0\prec v_1\prec v_2\prec v_3\prec v_4$.
Then $\alpha_2=(v_0,v_4,v_3,v_2,v_1)$ is not semi-induced because
$v_1\in \{v\in {\operatorname{Lk}}_{C_5}(v_0): v\prec v_4\}=\{v_1\}$.
The paths $\alpha_1=(v_0,v_1,v_2,v_3,v_4)$ and $\alpha_3=(v_0,v_4,v_3,v_2)$
are the only maximal semi-induced paths starting from $v_0$.
Similarly, $\alpha_4=(v_1,v_0,v_4,v_3,v_2)$ and $\alpha_5=(v_1,v_2,v_3,v_4)$
are the only maximal semi-induced paths starting from $v_1$.

Let $P_8$ be the path graph on eight vertices as in Figure~\ref{fig:C5}(b),
and let $\phi:P_8\to C_5$ map $v_i'$ and $v_i''$ to $v_i$ for $0 \leqslant i \leqslant 4$.
Then $\phi$ is an immersion.

The maximal induced paths in $C_5$ starting from $v_0$ are
$(v_0,v_1,v_2,v_3)$ and $(v_0,v_4,v_3,v_2)$,
and they can be lifted to paths in $P_8$ starting from $v_0'$.
Hence $\phi$ has IPL for $v_0'$.
In the same way, $\phi$ has IPL and SIPL for $\{v_0',v_1'\}$.
In fact, the map $\phi:P_8\to C_5$ is an example given in~\cite{CDK13} such that
$\phi^*:G(C_5)\to G(P_8)$ is injective.
Theorem~\ref{main:cdk} gives a necessary and sufficient condition on
$(m,n)$ for $\phi:P_n\to C_m$, defined as above, to induce
an injective group homomorphism.
\end{example}

\begin{figure}
\begin{tabular}{*4c}
$\begin{xy}
( 0, 0) *{\bullet};
( 0, 0); (10,-8) *{\bullet} **@{-}; (20,0) *{\bullet} **@{-};
( 0, 0); (10, 0) *{\bullet} **@{-}; (20,0) *{\bullet} **@{-};
( 0, 0); (10, 8) *{\bullet} **@{-}; (20,0) *{\bullet} **@{-};
(-3, 0) *{v_0};
(10,-11) *{v_1};
(10,-3) *{v_2};
(10, 5) *{v_3};
(23, 0) *{v_4};
\end{xy}$
&\qquad&
$\begin{xy}
( 0, 0) *{\bullet};
( 0, 0); (10,-8) *{\bullet} **@{-}; (20,-8) *{\bullet} **@{-}; (30,-10)  *{\bullet} **@{-};
   (20,-8); (30,-6)  *{\bullet} **@{-};
( 0, 0); (10, 0) *{\bullet} **@{-}; (20, 0) *{\bullet} **@{-}; (30,-2)  *{\bullet} **@{-};
   (20, 0); (30, 2)  *{\bullet} **@{-};
( 0, 0); (10, 8) *{\bullet} **@{-}; (20, 8) *{\bullet} **@{-}; (30, 6)  *{\bullet} **@{-};
   (20, 8); (30,10)  *{\bullet} **@{-};
(-3, 0) *{v_0};
(10,-11)*{v_1};
(10,-3) *{v_2};
(10, 5) *{v_3};
(21, 5) *{v_4};
(21,-3) *{v_4};
(21,-11)*{v_4};
(33,10) *{v_2};
(33,6) *{v_1};
(33,2) *{v_3};
(33,-2) *{v_1};
(33,-6) *{v_3};
(33,-10) *{v_2};
\end{xy}$\\[3em]
(a) Graph $\Gamma=K_{2,3}$ &&
(b) Maximal paths from $v_0$\\[2em]
$\begin{xy}
( 0, 0) *{\bullet};
( 0, 0); (10,-8) *{\bullet} **@{-}; (20,-8) *{\bullet} **@{-};
( 0, 0); (10, 0) *{\bullet} **@{-}; (20, 0) *{\bullet} **@{-};
( 0, 0); (10, 8) *{\bullet} **@{-}; (20, 8) *{\bullet} **@{-};
(-3, 0) *{v_0};
(10,-11)*{v_1};
(10,-3) *{v_2};
(10, 5) *{v_3};
(21, 5) *{v_4};
(21,-3) *{v_4};
(21,-11)*{v_4};
\end{xy}$&&
$\begin{xy}
( 0, 0) *{\bullet};
( 0, 0); (10,-8) *{\bullet} **@{-}; (20,-8) *{\bullet} **@{-}; (30,-10)  *{\bullet} **@{-};
   (20,-8); (30,-6)  *{\bullet} **@{-};
( 0, 0); (10, 0) *{\bullet} **@{-}; (20, 0) *{\bullet} **@{-}; (30,0)  *{\bullet} **@{-};
( 0, 0); (10, 8) *{\bullet} **@{-}; (20, 8) *{\bullet} **@{-};
(-3, 0) *{v_0};
(10,-11)*{v_1};
(10,-3) *{v_2};
(10, 5) *{v_3};
(21, 5) *{v_4};
(21,-3) *{v_4};
(21,-11)*{v_4};
(33,0) *{v_3};
(33,-6) *{v_3};
(33,-10) *{v_2};
\end{xy}$
\\[3em]
(c) Maximal induced paths from $v_0$ &&
(d) Maximal semi-induced paths from $v_0$
\end{tabular}
\caption{Paths in $\Gamma=K_{2,3}$ starting from $v_0$. For semi-induced paths, we use the
order on $V(\Gamma)$ given by $v_0\prec v_1\prec v_2\prec v_3$.}
\label{fig:path-K23}
\end{figure}
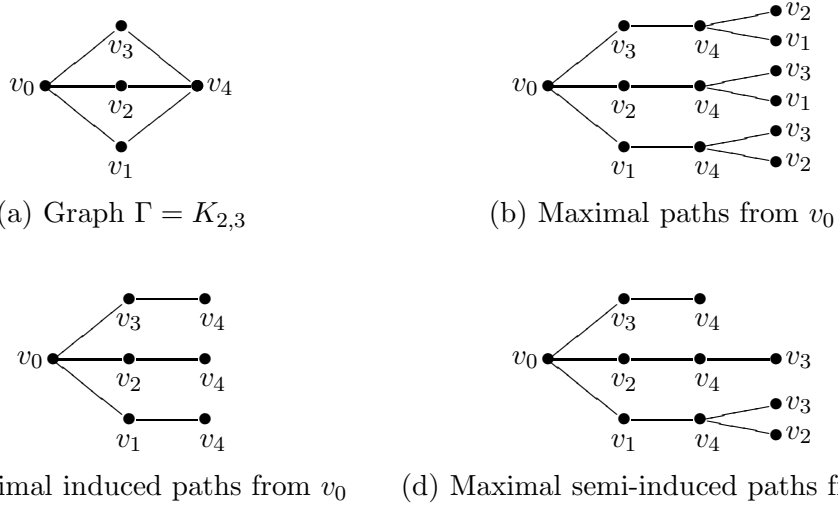

\begin{example}
Let $\Gamma$ be the complete bipartite graph $K_{2,3}$.
See Figure~\ref{fig:path-K23}, which shows
maximal paths, semi-induced paths and induced paths  starting from $v_0$ in $\Gamma$.
\end{example}

\begin{example}
If $\Gamma$ is the complete graph $K_n$ on $n$ vertices endowed with the total order
$v_0\prec v_1\prec\cdots\prec v_{n-1}$,
then $\alpha=(v_{i_0},v_{i_1},\ldots,v_{i_k})$ is a maximal semi-induced path starting from $v_0$
if and only if $0=i_0<i_1<i_2<\cdots<i_{k-1}<i_k=n-1$,
hence there are $2^{n-2}$ maximal semi-induced
paths starting from $v_0$.
\end{example}

\begin{lemma}\label{lem:link-sipl}
Let $\phi:\Lambda\to\Gamma$ be an immersion,
and let $v'$ be a vertex of $\Lambda$ such that
$\phi({\operatorname{Lk}}_\Lambda(v'))={\operatorname{Lk}}_\Gamma(v)$, where $v=\phi(v')$.
Let ${\operatorname{Lk}}_\Gamma(v)=\{x_1,\ldots,x_l\}$
and ${\operatorname{Lk}}_\Lambda(v')=\{x_1',\ldots,x_l'\}$ with $\phi(x_i')=x_i$ for  $1 \leqslant i \leqslant l$.
Suppose that a total order is given on $V(\Gamma)$
such that $x_1\prec x_2\prec\cdots\prec x_l$.
For $1 \leqslant i \leqslant l$, let
\begin{align*}
\Gamma_i  &=\Gamma{\backslash} \{v,x_1,\ldots,x_{i-1}\},\\
\Lambda_i &=\Lambda{\backslash} \phi^{-1}(\{v,x_1,\ldots,x_{i-1}\}),\\
\phi_i    &=\phi(\Lambda_i,\Gamma_i):\Lambda_i\to\Gamma_i.
\end{align*}
Suppose that each $V(\Gamma_i)$ inherits the total order
from $V(\Gamma)$.
Then $\phi:\Lambda\to\Gamma$ has SIPL for $v'$
if and only if
each $\phi_i$ has SIPL for $x_i'$.
\end{lemma}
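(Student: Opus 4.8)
The plan is to establish a length-preserving correspondence between semi-induced paths issuing from $v$ in $\Gamma$ and semi-induced paths issuing from the vertices $x_s$ in the graphs $\Gamma_s$, and to check that this correspondence is compatible with path lifting. First note that, since $\phi$ is an immersion and $\phi({\operatorname{Lk}}_\Lambda(v'))={\operatorname{Lk}}_\Gamma(v)$, the restriction $\phi|_{{\operatorname{Lk}}_\Lambda(v')}$ is a bijection onto ${\operatorname{Lk}}_\Gamma(v)$; in particular $x_s'$ is the unique neighbour of $v'$ in $\Lambda$ lying over $x_s$. A semi-induced path $\alpha$ starting at $v$ is either the trivial path $(v)$, which lifts to $(v')$, or has the form $\alpha=(v_0,v_1,\ldots,v_k)$ with $v_0=v$ and $v_1=x_s$ for a unique index $s$, since its second vertex lies in ${\operatorname{Lk}}_\Gamma(v)$. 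To such an $\alpha$ I would associate its tail $\beta=(v_1,\ldots,v_k)$ and show that $\alpha\mapsto\beta$ is the required correspondence.

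The key combinatorial step is the claim that \emph{$\alpha=(v_0,\ldots,v_k)$ is semi-induced in $\Gamma$ if and only if its tail $\beta=(v_1,\ldots,v_k)$ is a semi-induced path in $\Gamma_s$}, where $v_1=x_s$. I would unwind the reformulation that $\alpha$ is semi-induced exactly when, for each $j\geqslant 2$, $v_j\notin\bigcup_{p=0}^{j-2}\{u\in{\operatorname{Lk}}_\Gamma(v_p):u\prec v_{p+1}\}$. The crucial observation concerns the $p=0$ term: because $x_1\prec\cdots\prec x_l$, one has $\{u\in{\operatorname{Lk}}_\Gamma(v):u\prec x_s\}=\{x_1,\ldots,x_{s-1}\}$, so the constraints coming from the initial vertex $v_0=v$ say precisely that $v_2,\ldots,v_k$ avoid $\{x_1,\ldots,x_{s-1}\}$, i.e.\ that $\beta$ lies entirely in $\Gamma_s=\Gamma{\backslash}\{v,x_1,\ldots,x_{s-1}\}$. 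Once the $v_j$ are known to lie in $V(\Gamma_s)$, the remaining terms ($p\geqslant 1$) may be intersected with $V(\Gamma_s)$ without altering any membership test, and because $\Gamma_s$ is an induced subgraph carrying the inherited order they become exactly the semi-induced condition for $\beta$ in $\Gamma_s$ after the index shift $p\mapsto p-1$ (the term at $j=2$ is vacuous and accounts for the change of range).

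It remains to match lifts. Any lift of $\alpha$ from $v'$ must begin $(v',x_s',\ldots)$ by the bijection noted above, so by the unique path lifting property for immersions, $\alpha$ lifts from $v'$ in $\Lambda$ if and only if $\beta$ lifts from $x_s'$. I would then observe that any lift of $\beta$ stays inside $\Lambda_s$: its vertices map into $V(\Gamma_s)$, and $\Lambda_s=\Lambda{\backslash}\phi^{-1}(\{v,x_1,\ldots,x_{s-1}\})$ is exactly the induced subgraph of $\Lambda$ on $\phi^{-1}(V(\Gamma_s))$. Since $\Lambda_s$ is induced and $\phi_s$ is the restriction of $\phi$, lifting $\beta$ through $\phi$ in $\Lambda$ coincides step by step with lifting $\beta$ through $\phi_s$ in $\Lambda_s$, because at each stage the unique candidate vertex already lies in $\Lambda_s$ and the joining edge is inherited. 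Combining the two equivalences gives both directions: if $\phi$ has SIPL for $v'$ then every such $\beta$ lifts, so each $\phi_i$ has SIPL for $x_i'$; conversely, if each $\phi_i$ has SIPL for $x_i'$ then every nontrivial semi-induced $\alpha$ lifts via its tail and the trivial path lifts by hand, so $\phi$ has SIPL for $v'$.

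The step I expect to be the main obstacle is the bookkeeping in the combinatorial claim. One must verify that the semi-induced condition for $\alpha$ decouples cleanly into a single ``initial-vertex'' constraint, engineered by the ordering $x_1\prec\cdots\prec x_l$ to coincide with the deletion of $\{x_1,\ldots,x_{s-1}\}$, together with ``interior'' constraints that survive restriction to the induced subgraph with inherited order. Care is needed with the index shift between $\alpha$ and $\beta$, and with the point that deleting vertices outside $\Gamma_s$ from the forbidden sets does not affect any membership test precisely because the tested vertices $v_j$ already lie in $\Gamma_s$.
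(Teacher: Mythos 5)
Your proposal is correct and follows essentially the same route as the paper's proof: both decompose a nontrivial semi-induced path from $v$ into the edge $(v,x_i)$ plus a tail, use the ordering $x_1\prec\cdots\prec x_l$ to identify the initial-vertex constraint with avoidance of $\{x_1,\ldots,x_{i-1}\}$ (so the tail is a semi-induced path in $\Gamma_i$ with the inherited order, and conversely), and match lifts via the unique path lifting property together with the fact that $\Lambda_i$ is the induced subgraph on $\phi^{-1}(V(\Gamma_i))$. The only difference is organizational: you package the combinatorial step as a single if-and-only-if correspondence $\alpha\leftrightarrow\beta$, while the paper verifies each implication separately within the two directions of the lemma.
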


\begin{proof}
Assume that $\phi$ has SIPL for $v'$. Fix $i\in\{1,\ldots,l\}$.
Let $\alpha_1=(x_i,v_1,\ldots,v_k)$ be a semi-induced path
in $\Gamma_i$ starting from $x_i$.
Then $\alpha=(v,x_i,v_1,\ldots,v_k)$ is a path in $\Gamma$
since $x_i\in{\operatorname{Lk}}_\Gamma(v)$.

Since $\alpha_1$ is a path in $\Gamma_i$, we have
$v_j\not\in\{x_1,\ldots,x_{i-1}\}
=\{u\in {\operatorname{Lk}}_\Gamma(v): u\prec x_i\}$
for $1 \leqslant j \leqslant k$.
Notice that $\alpha_1$ is semi-induced also in $\Gamma$
because $\Gamma_i$ is an induced subgraph of $\Gamma$.
Hence, for $2 \leqslant j \leqslant k$, we have
$v_j\not\in\{u\in {\operatorname{Lk}}_\Gamma(x_i): u\prec v_1\}$ and
$v_j\not\in\{u\in {\operatorname{Lk}}_\Gamma(v_p): u\prec v_{p+1}\}$
for $1 \leqslant p \leqslant j-2$.
These imply that $\alpha$ is a semi-induced path in $\Gamma$.

Since $\phi$ has SIPL for $v'$, there is a lift
$\tilde\alpha=(v',x_i'',v_1',\ldots,v_k')$ starting from $v'$.
Because $(v',x_i')$ is a lift of the path $(v,x_i)$,
the unique path lifting property implies that $x_i''=x_i'$.
Then the restriction $(x_i',v_1',\ldots,v_k')$ of
$\tilde\alpha$ is a lift of $\alpha_1$ to $\Lambda_i$
starting from $x_i'$.
Therefore $\phi_i$ has SIPL for $x_i'$.

Conversely, assume that each $\phi_i$ has SIPL for $x_i'$.
Let $\alpha$ be a semi-induced path in $\Gamma$
starting from $v$, hence $\alpha$ is of the form
$\alpha=(v,x_i,v_1,\ldots,v_k)$ for some $1 \leqslant i \leqslant l$
and $v_1,\ldots,v_k\in V(\Gamma)$.

Let $\alpha_1=(x_i,v_1,\ldots,v_k)$.
Since $\alpha$ is semi-induced in $\Gamma$, we have
$v_j\not\in \{u\in {\operatorname{Lk}}_\Gamma(v): u\prec x_i\}=\{x_1,\ldots,x_{i-1}\}$
for $1 \leqslant j \leqslant k$.
Hence $\alpha_1$ is a path in $\Gamma_i$.
Since $\alpha_1$ is semi-induced in $\Gamma$,
it is semi-induced also in $\Gamma_i$
because $V(\Gamma_i)$ inherits
the total order from $V(\Gamma)$.

Since $\phi_i$ has SIPL for $x_i'$,
there is a lift $\tilde\alpha_1=(x_i',v_1',\ldots,v_k')$
of $\alpha_1$ to $\Lambda_i$ starting from $x_i'$.
Then $\tilde\alpha=(v',x_i',v_1',\ldots,v_k')$
is a lift of $\alpha$ to $\Lambda$ starting from $v'$.
Therefore $\phi$ has SIPL for $v'$.
\end{proof}

\subsection{Semi-induced path lifting property and embedding between RAAGs}

Compare the similarity between Lemma~\ref{lem:link-sipl} and Lemma~\ref{lem:link-s}.
From this, we have the following theorem almost for free.

\begin{theorem}\label{thm:sipl2surv}
Let $\phi:\Lambda\to\Gamma$ be an immersion between finite graphs,
and let $F$ be a finite nonempty subset of $V(\Lambda)$.
If $\phi$ has SIPL for $F$, then $\phi$ is $F$-surviving.
\end{theorem}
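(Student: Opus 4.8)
The plan is to reduce immediately to the single-vertex case: it suffices to show that for each fixed $v' \in F$, if $\phi$ has SIPL for $v'$ then $\phi$ is $v'$-surviving, since ``$F$-surviving'' means ``$v'$-surviving for every $v' \in F$.'' The engine of the argument is the parallel structure of Lemma~\ref{lem:link-sipl} and Lemma~\ref{lem:link-s}: both take an immersion with $\phi({\operatorname{Lk}}_\Lambda(v'))={\operatorname{Lk}}_\Gamma(v)$ and reduce a property at $v'$ (lying over $v$) to the same property at each $x_i'$ (lying over $x_i$) for the restricted immersions $\phi_i \colon \Lambda_i \to \Gamma_i$ onto the smaller graphs $\Gamma_i = \Gamma\backslash\{v,x_1,\ldots,x_{i-1}\}$. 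I would therefore argue by induction on $|V(\Gamma)|$, using Lemma~\ref{lem:link-sipl} to push SIPL downward to the $\phi_i$, the induction hypothesis to convert each downstairs SIPL into $x_i'$-surviving, and Lemma~\ref{lem:link-s} to pull surviving back upward to $v'$.

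First I would verify that SIPL for $v'$ supplies the hypothesis $\phi({\operatorname{Lk}}_\Lambda(v')) = {\operatorname{Lk}}_\Gamma(v)$ shared by both lemmas. The inclusion $\phi({\operatorname{Lk}}_\Lambda(v')) \subseteq {\operatorname{Lk}}_\Gamma(v)$ is automatic because $\phi$ is a map of graphs. For the reverse inclusion, observe that for any $x \in {\operatorname{Lk}}_\Gamma(v)$ the two-vertex path $(v,x)$ is vacuously semi-induced; SIPL for $v'$ lifts it to a path $(v',x')$, so $x = \phi(x') \in \phi({\operatorname{Lk}}_\Lambda(v'))$. Since $\phi$ is an immersion, $\phi|_{{\operatorname{Lk}}_\Lambda(v')}$ is injective, so it is in fact a bijection ${\operatorname{Lk}}_\Lambda(v') \to {\operatorname{Lk}}_\Gamma(v)$; writing ${\operatorname{Lk}}_\Gamma(v)=\{x_1,\ldots,x_l\}$ and ${\operatorname{Lk}}_\Lambda(v')=\{x_1',\ldots,x_l'\}$ with $\phi(x_i')=x_i$ puts us exactly in the setup of the two lemmas.

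For the inductive step I would fix any total order on $V(\Gamma)$ with $x_1 \prec \cdots \prec x_l$, form $\Gamma_i$, $\Lambda_i$, and $\phi_i$, and note that each $\phi_i$ is again an immersion between finite graphs, local injectivity being inherited by restriction to induced subgraphs. Since $\phi$ has SIPL for $v'$, Lemma~\ref{lem:link-sipl} gives that each $\phi_i$ has SIPL for $x_i'$. Because every $\Gamma_i$ omits at least $v$, we have $|V(\Gamma_i)| < |V(\Gamma)|$, so the induction hypothesis applies and shows that each $\phi_i$ is $x_i'$-surviving. Lemma~\ref{lem:link-s} then concludes that $\phi$ is $v'$-surviving. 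The base case $|V(\Gamma)| = 1$ is subsumed: then ${\operatorname{Lk}}_\Gamma(v)=\emptyset$, there are no $\phi_i$, and Lemma~\ref{lem:link-s} with $l=0$ delivers $v'$-surviving with vacuous hypotheses.

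I do not expect a serious obstacle, consistent with the remark that the theorem follows ``almost for free.'' The only points needing care are the translation of SIPL for $v'$ into the link hypothesis common to both lemmas (handled by lifting length-two paths) and the bookkeeping that a \emph{single} ordered decomposition $\{\Gamma_i,\Lambda_i,\phi_i\}$, built from the order $x_1 \prec \cdots \prec x_l$, simultaneously feeds Lemma~\ref{lem:link-sipl} downward and Lemma~\ref{lem:link-s} upward. Matching the indexing conventions of the two lemmas, so that the ordering used to define ``semi-induced'' is the same one governing the removal order $x_1,\ldots,x_{i-1}$, is the one spot where an index slip could creep in, and I would state the order explicitly to forestall it.
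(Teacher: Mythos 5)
Your proposal is correct and follows essentially the same route as the paper's own proof: lift the length-two semi-induced paths $(v,x)$ to establish $\phi({\operatorname{Lk}}_\Lambda(v'))={\operatorname{Lk}}_\Gamma(v)$, then induct on $|V(\Gamma)|$, pushing SIPL down to the $\phi_i$ via Lemma~\ref{lem:link-sipl} and pulling the surviving property back up via Lemma~\ref{lem:link-s}. One small caution: since SIPL is defined relative to a \emph{given} total order $\prec$, you should relabel the $x_i$ so that $x_1\prec\cdots\prec x_l$ in that fixed order (as the paper does) rather than ``fix any total order'' afresh --- a point your closing paragraph already anticipates.
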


\begin{proof}
Choose any $v'\in F$ and let $v=p(v')$.
It suffices to show that $\phi$ is $v'$-surviving.
Notice that $\phi({\operatorname{Lk}}_\Lambda(v'))={\operatorname{Lk}}_\Gamma(v)$
because if $x\in{\operatorname{Lk}}_\Gamma(v)$ then $(v,x)$ is a semi-induced path in $\Gamma$
hence there is a unique lift $(v',x')$ of $(v,x)$ to $\Lambda$.

Let ${\operatorname{Lk}}_\Gamma(v)=\{x_1,\ldots,x_l\}$
and ${\operatorname{Lk}}_\Lambda(v')=\{x_1',\ldots,x_l'\}$ with $\phi(x_i')=x_i$ for  $1 \leqslant i \leqslant l$.
Rearranging $x_i$'s if necessary, we may assume that the total order on $V(\Gamma)$
is such that $x_1\prec x_2\prec\cdots\prec x_l$.
For $1 \leqslant i \leqslant l$, let
\begin{align*}
\Gamma_i  &=\Gamma{\backslash} \{v,x_1,\ldots,x_{i-1}\},\\
\Lambda_i &=\Lambda{\backslash} \phi^{-1}(\{v,x_1,\ldots,x_{i-1}\}),\\
\phi_i    &=\phi(\Lambda_i,\Gamma_i):\Lambda_i\to\Gamma_i.
\end{align*}

By Lemma~\ref{lem:link-sipl}, each $\phi_i$ has SIPL for $x_i'$.
Using induction on the number of vertices of $\Gamma$, we may assume that
each $\phi_i$ is $x_i'$-surviving.
(If $|V(\Gamma)|=1$, then $\phi$ is obviously $v'$-surviving.)
Then $\phi$ is $v'$-surviving by Lemma~\ref{lem:link-s}.
\end{proof}

\begin{proposition}\label{thm:sipl-emb}
Let $\Gamma$ be a graph with $m$ vertices,
$p:\tilde \Gamma\to\Gamma$ a covering,
and $F$ a finite nonempty subset of $V(\tilde\Gamma)$.
Then there exists an induced subgraph $\Lambda$ of $\tilde\Gamma$
with $|V(\Lambda)| \leqslant |F|\cdot 2^{m-1}$
such that $\phi=p|_\Lambda:\Lambda\to\Gamma$ has SIPL for $F$.
Furthermore, if the induced subgraph of $\tilde \Gamma$ on $F$ is connected,
then $\Lambda$ can be chosen to be connected.
\end{proposition}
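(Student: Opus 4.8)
The plan is to build $\Lambda$ as the ``semi-induced lift tree'' of $F$. For each $v'\in F$ with $v=p(v')$, I would let $\Lambda_{v'}$ be the induced subgraph of $\tilde\Gamma$ spanned by all vertices lying on the lifts, starting at $v'$, of the semi-induced paths in $\Gamma$ issuing from $v$; by the unique path lifting property each such lift exists in $\tilde\Gamma$ and is uniquely determined by its initial vertex $v'$. I then set $\Lambda$ to be the induced subgraph of $\tilde\Gamma$ on $\bigcup_{v'\in F}V(\Lambda_{v'})$. With this definition SIPL for $F$ is essentially free: given $v'\in F$ and a semi-induced path $\alpha$ in $\Gamma$ from $v$, its unique lift $\tilde\alpha$ from $v'$ has all of its vertices in $V(\Lambda_{v'})\subseteq V(\Lambda)$ by construction, and since $\Lambda$ is an \emph{induced} subgraph the consecutive adjacencies of $\tilde\alpha$ are edges of $\Lambda$, so $\tilde\alpha$ is a path in $\Lambda$ lifting $\alpha$.

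The size estimate reduces to the single-vertex bound $|V(\Lambda_{v'})|\leqslant 2^{m-1}$, because $|V(\Lambda)|\leqslant\sum_{v'\in F}|V(\Lambda_{v'})|\leqslant |F|\cdot 2^{m-1}$. I would prove this bound by induction on $m=|V(\Gamma)|$, using the decomposition underlying Lemma~\ref{lem:link-sipl}. Writing $\operatorname{Lk}_\Gamma(v)=\{x_1,\dots,x_l\}$ with $x_1\prec\cdots\prec x_l$ and $x_i'$ the unique lift of $x_i$ adjacent to $v'$, every nontrivial semi-induced path from $v$ is $v$ followed by a semi-induced path from $x_i$ in $\Gamma_i=\Gamma\setminus\{v,x_1,\dots,x_{i-1}\}$ for a unique $i$; lifting, this yields the vertex identity $V(\Lambda_{v'})=\{v'\}\cup\bigcup_{i=1}^{l}V(\Lambda_{x_i'}^{(i)})$, where $\Lambda_{x_i'}^{(i)}$ is the analogous subgraph for the restricted covering $p^{-1}(\Gamma_i)\to\Gamma_i$ at the single vertex $x_i'$. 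Since $|V(\Gamma_i)|=m-i$, the inductive hypothesis gives $|V(\Lambda_{x_i'}^{(i)})|\leqslant 2^{m-i-1}$, whence
\[
|V(\Lambda_{v'})|\;\leqslant\;1+\sum_{i=1}^{l}2^{m-i-1}\;=\;1+\bigl(2^{m-1}-2^{m-l-1}\bigr)\;\leqslant\;2^{m-1},
\]
the last inequality holding because $l\leqslant m-1$. Equality for $K_m$, where a vertex has exactly $2^{m-1}$ semi-induced paths issuing from it, shows the bound is sharp.

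To make the induction legitimate I would check three routine points. First, the restriction $p^{-1}(\Gamma_i)\to\Gamma_i$ of a covering to the preimage of an induced subgraph is again a covering, since the link bijection $\operatorname{Lk}_{\tilde\Gamma}(u')\to\operatorname{Lk}_\Gamma(u)$ restricts to a bijection onto $\operatorname{Lk}_{\Gamma_i}(u)$. Second, that $p|_{\Lambda_{v'}}$ has SIPL at $v'$ I would deduce from Lemma~\ref{lem:link-sipl}: its link hypothesis $\phi(\operatorname{Lk}_{\Lambda_{v'}}(v'))=\operatorname{Lk}_\Gamma(v)$ holds because the covering gives a bijection of links and all $x_i'$ lie in $\Lambda_{v'}$, while each $\phi_i$ inherits SIPL at $x_i'$ from $\Lambda_{x_i'}^{(i)}$ once one observes $\Lambda_{x_i'}^{(i)}\subseteq\Lambda_i$ and that SIPL at a vertex is preserved when the domain subgraph is enlarged within $\tilde\Gamma$ (the unique lift already lands in the smaller subgraph). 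Third, each $\Lambda_{v'}$ is connected, as every vertex is joined to $v'$ by a lifted path lying inside $\Lambda_{v'}$; hence if the subgraph of $\tilde\Gamma$ induced on $F$ is connected, then its edges join the various $v'$ within $\Lambda$ and the whole union $\Lambda$ is connected.

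The main obstacle is the inductive bookkeeping in the second point: one must reconcile the subgraph $\Lambda_i$ that Lemma~\ref{lem:link-sipl} forms from the ambient $\Lambda_{v'}$ with the independently, recursively constructed $\Lambda_{x_i'}^{(i)}$, and confirm that passing from the latter to the larger former preserves SIPL at $x_i'$. Everything else---the arithmetic, the covering restriction, and the connectedness---is routine.
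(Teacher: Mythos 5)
Your proposal is correct and follows essentially the same route as the paper: the same construction of $\Lambda_{v'}$ as the union of lifts of (maximal) semi-induced paths from $v'$, the same induction on $m$ via the link decomposition $\Gamma_i=\Gamma{\backslash}\{v,x_1,\ldots,x_{i-1}\}$ giving $|V(\Lambda_{v'})|\leqslant 1+\sum_{i=1}^{l}2^{m-i-1}\leqslant 2^{m-1}$, and the same connectedness argument. The only difference is that your ``main obstacle'' is a detour the paper avoids: SIPL for $\Lambda_{v'}$ at $v'$ is immediate from the construction (every semi-induced path extends to a maximal one, and its lift lies in $\Lambda_{v'}$ by definition), so no appeal to Lemma~\ref{lem:link-sipl} or to monotonicity of SIPL under enlarging the domain is needed---that lemma is only used in the paper to prove Theorem~\ref{thm:sipl2surv}, not this proposition.
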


\begin{proof}
For each $v'\in V(\tilde\Gamma)$, let $\Lambda_{v'}$ be the union of the lifts
of all maximal semi-induced paths in $\Gamma$ starting from  $p(v')$
to $\tilde \Gamma$ starting from $v'$.
Then $p|_{\Lambda_{v'}}:\Lambda_{v'}\to\Gamma$ has SIPL for $v'$.

\medskip

\noindent\emph{Claim.}\ \emph{$|V(\Lambda_{v'})| \leqslant 2^{m-1}$.}

\medskip

\begin{proof}[Proof of Claim]
We use induction on $m=|V(\Gamma)|$.
If $m=1$, then both $\Gamma$ and $\Lambda_{v'}$ are
the graph with one vertex and no edge,
hence the claim holds.

Let ${\operatorname{Lk}}_{\tilde \Gamma}(v')=\{x_1',\ldots,x_l'\}$
and ${\operatorname{Lk}}_{\Gamma}(v)=\{x_1,\ldots,x_l\}$,
where $v=p(v')$ and $x_i=p(x_i')$ for $1 \leqslant i \leqslant l$.
Rearranging $x_i$'s if necessary,
we may assume that the total order on $V(\Gamma)$ is such that
$x_1\prec x_2\prec\cdots\prec x_l$.
For $1 \leqslant i \leqslant l$, let
\begin{align*}
\Gamma_i  &=\Gamma{\backslash} \{v,x_1,\ldots,x_{i-1}\},
\end{align*}
and let $\Lambda_i$ be the union of the lifts
of all maximal semi-induced paths in $\Gamma_i$ starting from  $x_i$
to $\tilde \Gamma$ starting from $x_i'$.
Then $\Lambda_i$ is defined in the same way as $\Lambda_{v'}$,
where $\Gamma_i$ and $x_i'$ play the roles of
$\Gamma$ and $v'$, respectively.
By induction hypothesis,
$|V(\Lambda_i)| \leqslant 2^{m-i-1}$
because $|V(\Gamma_i)|=|V(\Gamma)|-i=m-i$.
Notice that $l \leqslant m-1$ and
$V(\Lambda_{v'})=\{v'\}
\cup\left(\cup_{i=1}^l V(\Lambda_i)\right)$.
Therefore
$$
|V(\Lambda_{v'})|
\leqslant 1+\sum_{i=1}^l |V(\Lambda_i)|
\leqslant 1+\sum_{i=1}^{m-1} 2^{m-i-1}
= 1+\sum_{k=0}^{m-2} 2^k
=2^{m-1}.
$$\vskip -\baselinestretch\baselineskip
\end{proof}

Let $\Lambda$ be the induced subgraph of $\tilde\Gamma$
on the vertices of $\cup_{v'\in F}\Lambda_{v'}$,
and let $\phi=p|_\Lambda:\Lambda\to\Gamma$.
By the construction, $\phi$ has SIPL for $F$ and
$|V(\Lambda)|
\leqslant \sum_{v'\in F} |V(\Lambda_{v'})|
\leqslant |F|\cdot 2^{m-1}$.
It is easy to see that
if the induced subgraph of $\tilde\Gamma$ on $F$ is connected,
then $\Lambda$ is connected.
\end{proof}

In the above theorem, if $p:\tilde\Gamma\to\Gamma$ is the universal cover
and $\Gamma$ is connected, we can take $F$ as the vertex set of
a lift of a maximal tree in $\Gamma$.
Then $|F|=m$ and $\Lambda$ is a tree with
$|V(\Lambda)| \leqslant |F|\cdot 2^{m-1} =m2^{m-1}$.
Combining this observation with Lemma~\ref{lem:KK10}
and Theorem~\ref{thm:sipl2surv}, we have the following.

\begin{corollary}\label{cor:sipl-tree}
If $\Gamma$ is a connected graph with $m$ vertices
and $p:\tilde\Gamma\to\Gamma$ is the universal cover,
then there exists an induced subtree $T$ of $\tilde \Gamma$ with $|V(T)| \leqslant m2^{m-1}$
such that $\phi^*:G(\Gamma)\to G(T)$ is a quasi-isometric group embedding,
where $\phi=p|_T$.
\end{corollary}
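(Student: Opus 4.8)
The plan is to assemble the corollary directly from the three ingredients named just before its statement: Proposition~\ref{thm:sipl-emb}, Theorem~\ref{thm:sipl2surv}, and Lemma~\ref{lem:KK10}. The only genuinely new work is choosing the set $F$ correctly so that two things happen simultaneously — the resulting subgraph $\Lambda$ is a \emph{tree} (not merely an induced subgraph), and the surviving condition $\phi(F)=V(\Gamma)$ required by Lemma~\ref{lem:KK10} is met. Everything else is a chain of implications.

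First I would set up the covering. Since $\Gamma$ is connected, fix a maximal (spanning) tree $\tau\leqslant\Gamma$; then $V(\tau)=V(\Gamma)$ and $\tau$ has $m-1$ edges. Lift $\tau$ through the universal cover $p:\tilde\Gamma\to\Gamma$ to an isomorphic subtree $\tilde\tau\leqslant\tilde\Gamma$, and set $F=V(\tilde\tau)$. This choice does the bookkeeping for us: the restriction $p|_{\tilde\tau}$ is a graph isomorphism onto $\tau$, so $\phi(F)=p(F)=V(\tau)=V(\Gamma)$, giving the surjectivity hypothesis of Lemma~\ref{lem:KK10}, and $|F|=m$. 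Moreover the induced subgraph of $\tilde\Gamma$ on $F$ is exactly $\tilde\tau$, which is connected.

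Next I would invoke Proposition~\ref{thm:sipl-emb} with this $F$. It produces an induced subgraph $\Lambda\leqslant\tilde\Gamma$ with $|V(\Lambda)|\leqslant |F|\cdot 2^{m-1}=m2^{m-1}$ such that $\phi=p|_\Lambda$ has SIPL for $F$; and because the induced subgraph on $F$ is connected, the ``furthermore'' clause of the proposition lets us take $\Lambda$ connected. The point I would emphasize is that $\Lambda$ is an induced subgraph of $\tilde\Gamma$, and $\tilde\Gamma$ is a \emph{tree} — the universal cover of a connected graph is a tree, since $\pi_1(\tilde\Gamma)=1$ and any cycle would inject into $\pi_1$. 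A connected induced subgraph of a tree is again a tree, so $T:=\Lambda$ is an induced subtree of $\tilde\Gamma$ with $|V(T)|\leqslant m2^{m-1}$. (Here $\phi$ is automatically an immersion, being the restriction of a covering to an induced subgraph, as recorded in \S\ref{sec:intro}.)

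Finally I would run the implications. Theorem~\ref{thm:sipl2surv} converts ``$\phi$ has SIPL for $F$'' into ``$\phi$ is $F$-surviving''; then Lemma~\ref{lem:KK10}, whose hypothesis $\phi(F)=V(\Gamma)$ we arranged in the first step, upgrades this to the conclusion that $\phi^*:G(\Gamma)\to G(T)$ is a quasi-isometric group embedding. The main obstacle — really the only subtle point — is the interplay in the choice of $F$: one must pick $F$ connected (to force $\Lambda$, hence $T$, to be a tree via the ``furthermore'' clause) while simultaneously ensuring $\phi(F)=V(\Gamma)$ (so that Lemma~\ref{lem:KK10} applies), and a lifted spanning tree is precisely the object that achieves both at once while keeping $|F|=m$ so the bound stays $m2^{m-1}$.
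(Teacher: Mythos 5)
Your proof is correct and follows essentially the same route as the paper, which likewise takes $F$ to be the vertex set of a lift of a maximal tree in $\Gamma$ and then combines Proposition~\ref{thm:sipl-emb} (with its ``furthermore'' clause), Theorem~\ref{thm:sipl2surv}, and Lemma~\ref{lem:KK10}. Your additional verifications --- that $\phi(F)=V(\Gamma)$, that the induced subgraph on $F$ is connected, and that a connected induced subgraph of the tree $\tilde\Gamma$ is itself a tree --- are exactly the points the paper leaves implicit.
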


Theorem~\ref{main:upperbd} is immediate from the above corollary.

\begin{theorem}\label{main:upperbd}
For each graph $\Gamma$ with $m$ vertices,
there exists a tree $T$ with $|V(T)| \leqslant m2^{m-1}$ such that
$G(\Gamma)$ admits a quasi-isometric group embedding into $G(T)$.
\end{theorem}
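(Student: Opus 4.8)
The plan is to deduce the general statement from Corollary~\ref{cor:sipl-tree}, which already disposes of the connected case: for connected $\Gamma$ it produces an induced subtree $T$ of the universal cover with $|V(T)|\leqslant m2^{m-1}$ together with a quasi-isometric group embedding $G(\Gamma)\hookrightarrow G(T)$. So the only thing left to do is reduce a disconnected $\Gamma$ to its connected pieces, and the device that makes this work is the elementary fact that $G$ carries disjoint unions to direct products.

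Concretely, I would first write $\Gamma=\Gamma_1\sqcup\cdots\sqcup\Gamma_k$ as the disjoint union of its connected components, with $|V(\Gamma_i)|=m_i$ and $\sum_i m_i=m$. Since the complement of a disjoint union is a join and $A(\Delta_1\ast\Delta_2)=A(\Delta_1)\times A(\Delta_2)$, one has $G(\Gamma)=\prod_{i=1}^k G(\Gamma_i)$, and likewise $G(T_1\sqcup\cdots\sqcup T_k)=\prod_i G(T_i)$ for any graphs $T_i$. Applying Corollary~\ref{cor:sipl-tree} to each connected $\Gamma_i$ yields a tree $T_i$ with $|V(T_i)|\leqslant m_i2^{m_i-1}$ and a quasi-isometric embedding $G(\Gamma_i)\hookrightarrow G(T_i)$. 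Taking the product of these (a product of quasi-isometric embeddings is again one) gives a quasi-isometric embedding $G(\Gamma)=\prod_i G(\Gamma_i)\hookrightarrow\prod_i G(T_i)=G(T_0)$, where $T_0=T_1\sqcup\cdots\sqcup T_k$ is a forest.

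It then remains to promote the forest $T_0$ to a tree. For this I would adjoin one new vertex $w$ and join it by a single edge to one chosen vertex of each $T_i$; the resulting graph $T$ is connected and has exactly $|V(T)|-1$ edges, hence is a tree. By construction $T_0$ is the induced subgraph of $T$ on $V(T)\setminus\{w\}$, so $G(T_0)$ includes into $G(T)$ as the subgroup generated by those vertices, and this inclusion is a quasi-isometric embedding because it is a retract: the homomorphism $G(T)\to G(T_0)$ sending $w\mapsto 1$ and fixing the other vertices is well defined (here it is essential that $T_0$ is \emph{induced}, so no spurious edge through $w$ destroys a commuting relation) and splits the inclusion. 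Composing everything, $G(\Gamma)\hookrightarrow G(T)$ is a quasi-isometric group embedding into a tree.

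Finally I would verify the vertex bound. If $\Gamma$ is connected we are done directly by the corollary. If $k\geqslant 2$ then each $m_i\leqslant m-1$, so $2^{m_i-1}\leqslant 2^{m-2}$ and hence $\sum_i m_i2^{m_i-1}\leqslant 2^{m-2}\sum_i m_i=m2^{m-2}$; therefore $|V(T)|\leqslant 1+m2^{m-2}\leqslant m2^{m-1}$, the last inequality holding because $m2^{m-2}\geqslant 1$ for $m\geqslant 2$. The substance of the argument is entirely contained in Corollary~\ref{cor:sipl-tree}; the only genuinely new (though routine) point, and the step I would watch most carefully, is the forest-to-tree passage — checking that adjoining the single vertex $w$ keeps each $T_i$ induced so that the RAAG retraction applies, while the extra ``$+1$'' vertex is still absorbed within the target bound $m2^{m-1}$.
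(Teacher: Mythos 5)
Your proposal is correct and takes essentially the same route as the paper: the connected case is exactly Corollary~\ref{cor:sipl-tree}, and the disconnected case is reduced via $G(\Gamma)=\prod_i G(\Gamma_i)$ and by joining the trees $T_i$ through one extra vertex so that the forest remains an induced subgraph of the resulting tree (the paper does this two components at a time by induction, joining with a length-$2$ path, which coincides with your star construction when $k=2$). Your explicit retraction justification and the estimate $1+m2^{m-2}\leqslant m2^{m-1}$ are both valid; the paper instead absorbs the extra vertex through $m_12^{m_1-1}+m_22^{m_2-1}+1\leqslant(m_1+m_2)2^{m_1+m_2-1}$, an interchangeable verification.
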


\begin{proof}
We follow the argument of Kim and Koberda in~\cite{KK13b}.

If $\Gamma$ is connected, then it is Corollary~\ref{cor:sipl-tree}.

Suppose that $\Gamma$ is disconnected, hence
$\Gamma=\Gamma_1\coprod \Gamma_2$
with $|V(\Gamma_i)|=m_i \geqslant 1$ for $i=1,2$.
Using induction on $|V(\Gamma)|$, we may assume that for each $i=1,2$,
there exist a tree $T_i$ with $|V(T_i)| \leqslant m_i2^{m_i-1}$
and a quasi-isometric group embedding of $G(\Gamma_i)$
into $G(T_i)$.
Let $T$ be the tree obtained by joining a vertex in $T_1$
and another vertex in $T_2$ by a length 2 path.
Since $m_1,m_2 \geqslant 1$,
\begin{align*}
|V(T)|&=|V(T_1)|+|V(T_2)|+1 \leqslant m_12^{m_1-1}+m_22^{m_2-1}+1\\
& \leqslant (m_1+m_2)2^{m_1+m_2-1}=m2^{m-1}.
\end{align*}
Since $G(\Gamma)= G(\Gamma_1)\times G(\Gamma_2)$
and there is a natural quasi-isometric embedding of
$G(T_1)\times G(T_2)$ into $G(T)$,
there is a quasi-isometric group embedding of $G(\Gamma)$ into $G(T)$.
\end{proof}

\begin{remark}
Let us say that an immersion $\phi:\Lambda\to\Gamma$ has
the \emph{path lifting property} (PL) for $F \subseteq V(\Lambda)$
if, for any $v'\in F$ and for any path $\alpha$ in $\Gamma$
starting from $v=\phi(v')$,
there is a lift of $\alpha$ to $\Lambda$ starting from $v'$.

If $\phi$ has PL for $F$, then $\phi$ has SIPL for $F$, by definitions,
regardless of the choice of a particular total order on $V(\Gamma)$,
hence $\phi$ is $F$-surviving by Theorem~\ref{thm:sipl2surv}.

The notion of PL is simpler than SIPL, and PL is enough for some cases.
For example, we can obtain Corollary~\ref{cor:sipl-tree} with
$|V(T)| \leqslant m\cdot m!\,$ if SIPL in Proposition~\ref{thm:sipl-emb} is
replaced with PL; the upper bound on $|V(T)|$ increases
from $m\cdot 2^{m-1}$ to $m\cdot m!$.

However, the property SIPL is indeed necessary for some cases.
For example, the map $\phi:P_8\to C_5$ in Example~\ref{eg:cdk} has SIPL for $v_0'$
but it does not have PL for $v_0'$.
\end{remark}

\subsection{Induced path lifting property and embedding between RAAGs}\label{ssec:ipl}
Recall from Theorem~\ref{main:upperbd} that
for each graph $\Gamma$ with $m$ vertices,
there exists a tree $T$ with
$$|V(T)| \leqslant m2^{m-1}$$
such that
$G(\Gamma)$ admits a quasi-isometric group embedding into $G(T)$.
In this subsection,
we prove Theorem~\ref{main:lowerbd} which shows
that the above upper bound on $|V(T)|$ is almost optimal:
there is a collection of graphs $\Gamma_m$ with $m$ vertices
such that if $\phi:T\to\Gamma_m$ is an immersion of a tree $T$
into $\Gamma_m$ and $\phi^*:G(\Gamma_m)\to G(T)$ is injective,
then
$$|V(T)| \geqslant  2^{m/4}.$$

\begin{theorem}\label{thm:ipl}
Let $\phi:\Lambda\to \Gamma$ be an immersion between finite graphs,
and let $F$ be a finite nonempty subset of $V(\Lambda)$.
If $\phi$ is $F$-surviving, then $\phi$ has IPL for $F$.
\end{theorem}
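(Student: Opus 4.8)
The plan is to prove the contrapositive: assuming $\phi$ does \emph{not} have IPL for $F$, I will exhibit a reduced word in $G(\Gamma)$ whose image under $\phi^*$ admits an innermost cancellation, so that $\phi$ fails to be $F$-surviving. First I would fix a vertex $v'\in F$ at which IPL fails and choose, among all induced paths $\alpha=(v_0,v_1,\ldots,v_k)$ in $\Gamma$ with $v_0=\phi(v')$ that do not lift from $v'$, one of \emph{minimal} length $k$. By minimality together with the unique path lifting property, the prefix $(v_0,\ldots,v_{k-1})$ lifts to a unique path $(v_0',v_1',\ldots,v_{k-1}')$ with $v_0'=v'$, while $\alpha$ itself does not; since $\phi$ is an immersion the only possible obstruction is at the final step, so the key hypothesis I extract is
$$
\phi^{-1}(v_k)\cap\operatorname{Lk}_\Lambda(v_{k-1}')=\emptyset .
$$

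Next I would build the test word by nested conjugation. Set $U_k=v_k$ and $U_j=v_j\,U_{j+1}\,v_j^{-1}$ for $0\le j\le k-1$, so that $U_0=v_0v_1\cdots v_{k-1}\,v_k\,v_{k-1}^{-1}\cdots v_1^{-1}v_0^{-1}$, and let $w=U_0$. Because $\alpha$ is an \emph{induced} path, consecutive vertices are adjacent in $\Gamma$ and hence do not commute in $G(\Gamma)$; this blocks every cancellation, so each $U_j$ is reduced with $\operatorname{supp}(U_j)=\{v_j,\ldots,v_k\}$, and in particular $w$ is reduced. Note also that $v_1\in\operatorname{supp}(U_1)\cap\operatorname{Lk}_\Gamma(v_0)$, a fact I will use only at the very end to re-confirm reducedness of $w=v_0\,U_1\,v_0^{-1}$.

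The heart of the argument — and the step I expect to be the main obstacle — is the following claim, proved by \emph{downward} induction on $j$:
$$
\operatorname{supp}\big(\phi^*(U_j)\big)\cap\operatorname{Lk}_\Lambda(v_{j-1}')=\emptyset\qquad(1\le j\le k).
$$
Here it is essential that $\operatorname{supp}$ denotes the support of the \emph{group element} $\phi^*(U_j)$ (equivalently, of its reduced form): the point is that cancellations are allowed to occur inside $G(\Lambda)$, so letters that appear literally may disappear after reduction. The base case $j=k$ is exactly the non-liftability displayed above. For the inductive step I would argue that $\operatorname{supp}(\phi^*(U_{j+1}))\cap\operatorname{Lk}_\Lambda(v_j')=\emptyset$ forces $\phi^*(U_{j+1})$ to commute with $v_j'$, whence in $\phi^*(U_j)=\phi^*(v_j)\,\phi^*(U_{j+1})\,\phi^*(v_j)^{-1}$ the letter $v_j'$ cancels and $\operatorname{supp}(\phi^*(U_j))\subseteq\big(\phi^{-1}(v_j)\setminus\{v_j'\}\big)\cup\operatorname{supp}(\phi^*(U_{j+1}))$. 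I then rule out any $s$ in this set lying in $\operatorname{Lk}_\Lambda(v_{j-1}')$ by two structural inputs: if $\phi(s)=v_j$ then $s$ would be a preimage of $v_j$ adjacent to $v_{j-1}'$, forcing $s=v_j'$ by local injectivity of the immersion on $\operatorname{Lk}_\Lambda(v_{j-1}')$ (a contradiction, as $v_j'$ has cancelled); and if $s$ lies over $\{v_{j+1},\ldots,v_k\}$ then $\phi(s)\in\operatorname{Lk}_\Gamma(v_{j-1})$ is impossible, since $v_{j-1}$ is non-adjacent to $v_{j+1},\ldots,v_k$ in the induced path $\alpha$. Thus the immersion hypothesis and the induced-path non-adjacencies are precisely what drive the induction.

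Finally, taking $j=1$ yields $\operatorname{supp}(\phi^*(U_1))\cap\operatorname{Lk}_\Lambda(v')=\emptyset$. Since $w=v_0\,U_1\,v_0^{-1}$ is reduced, in $\phi^*(w)=\phi^*(v_0)\,\phi^*(U_1)\,\phi^*(v_0)^{-1}$ the two occurrences of $v'$ coming from $\phi^*(v_0)^{\pm1}$ are separated by a word whose support meets neither $\operatorname{Lk}_\Lambda(v')$ (by the claim, together with the fact that the remaining preimages of $v_0$ lie in the same fibre and so are non-adjacent to $v'$) nor $\{v'\}$ (as $U_1$ omits $v_0$). This is an innermost cancellation of $v'$, so $\phi$ is not $v'$-surviving and hence not $F$-surviving, completing the contrapositive. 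I would flag that the only genuinely delicate bookkeeping is reading $\operatorname{supp}$ as the invariant support of the reduced element; with that convention fixed, the nested-conjugation induction goes through cleanly and uses nothing beyond local injectivity of $\phi$ and the combinatorics of an induced path.
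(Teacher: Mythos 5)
Your proof is correct and takes essentially the same route as the paper's: the identical test word $w=v_0\cdots v_{k-1}v_kv_{k-1}^{-1}\cdots v_0^{-1}$ and the identical downward-induction claim that $\operatorname{supp}(\phi^*(U_j))\cap\operatorname{Lk}_\Lambda(v_{j-1}')=\emptyset$, driven by local injectivity of the immersion and the non-adjacencies of the induced path. The only cosmetic difference is that you extract the lifting obstruction $\phi^{-1}(v_k)\cap\operatorname{Lk}_\Lambda(v_{k-1}')=\emptyset$ from a minimal-length counterexample path, whereas the paper truncates the full lift inside a covering $p:\tilde\Gamma\to\Gamma$ extending $\phi$; your closing paragraph in fact spells out the innermost cancellation of $v'$ slightly more explicitly than the paper does.
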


\begin{proof}
Let $p:\tilde\Gamma\to\Gamma$ be a covering
which extends $\phi$, i.e.
$\Lambda \leqslant \tilde\Gamma$ and $\phi=p|_\Lambda$.

Assume that $\phi$ is $F$-surviving, but does not have IPL for $F$.
Then there exist a vertex $v'\in F$ and an induced path
$\alpha=(v_0,v_1,\ldots,v_k)$ in $\Gamma$ with $v_0=\phi(v')$ and $k \geqslant 1$
such that if $\tilde \alpha=(v_0',v_1',\ldots,v_k')$ is the lift of $\alpha$
to $\tilde\Gamma$ starting from $v'=v_0'$, then
$v_k'\not\in \Lambda$ and $v_i'\in \Lambda$ for $0 \leqslant i \leqslant k-1$.
Consider the following reduced word
$$w=v_0\cdots v_{k-1}v_k v_{k-1}^{-1}\cdots v_0^{-1}.$$

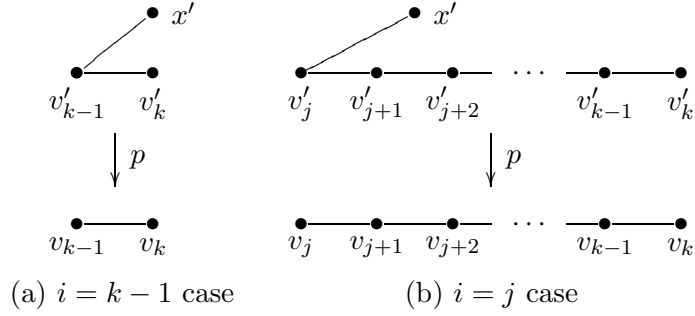
\begin{figure}
\begin{tabular}{ccc}
$\begin{xy}
( 0, 0) *{\bullet}; (10,0) *{\bullet} **@{-};
(0,-3) *{v_{k-1}};
(10,-3) *{v_{k}};
( 0, 20) *{\bullet}; (10, 20) *{\bullet} **@{-} ;
( 0, 20) *{\bullet}; (10, 28) *{\bullet} **@{-} ;
(0, 16) *{v_{k-1}'};
(10,16) *{v_k'};
(14,28) *{x'};
(5,5); (5,12) **@{-}    ?<*@{<}   ?(0.5) *!/^3mm/{p};
\end{xy}$
&&
$\begin{xy}
(0, 0) *{\bullet};
(10,0) *{\bullet} **@{-};
(20,0) *{\bullet} **@{-};
(25,0)  **@{-}; (30,0) *{\cdots}; (35,0);
(40,0) *{\bullet} **@{-};
(50,0) *{\bullet} **@{-};
(0,-3) *{v_{j}};
(10,-3) *{v_{j+1}};
(20,-3) *{v_{j+2}};
(40,-3) *{v_{k-1}};
(50,-3) *{v_{k}};
(0, 20) *{\bullet};
(10,20) *{\bullet} **@{-};
(20,20) *{\bullet} **@{-};
(25,20)  **@{-}; (30,20) *{\cdots}; (35,20);
(40,20) *{\bullet} **@{-};
(50,20) *{\bullet} **@{-};
(0, 16) *{v_{j}'};
(10,16) *{v_{j+1}'};
(20,16) *{v_{j+2}'};
(40,16) *{v_{k-1}'};
(50,16) *{v_{k}'};
(0, 20); (15,28) *{\bullet} **@{-};
(19,28) *{x'};
(25,5); (25,12) **@{-}    ?<*@{<}   ?(0.5) *!/^3mm/{p};
\end{xy}
$\\[1em]
(a) $i=k-1$ case &&
(b) $i=j$ case
\end{tabular}
\caption{Pictures for the proof of Theorem~\ref{thm:ipl}}
\label{fig:spl}
\end{figure}

\medskip

\noindent\emph{Claim.}\ \emph{For $i=0,\ldots,k-1$, we have
\begin{equation}\label{eq:ipl}
{\operatorname{supp}}(\phi^*(v_{i+1}\cdots  v_{k-1}v_k v_{k-1}^{-1}\cdots v_{i+1}^{-1}))
\cap {\operatorname{Lk}}_\Lambda(v_i')=\emptyset.
\end{equation}
In particular,
$v_i'\not\in {\operatorname{supp}}(\phi^*(v_iv_{i+1}\cdots  v_{k-1}v_k v_{k-1}^{-1}
\cdots v_{i+1}^{-1}v_i^{-1}))$.
}

\medskip

\begin{proof}[Proof of Claim]
Notice that the equality~\eqref{eq:ipl} implies that $v_i'$ commutes with each element in
${\operatorname{supp}}(\phi^*(v_{i+1}\cdots  v_{k-1}v_k v_{k-1}^{-1}\cdots v_{i+1}^{-1}))$,
hence $v_i'\not\in {\operatorname{supp}}(\phi^*(v_iv_{i+1}\cdots  v_{k-1}v_k v_{k-1}^{-1}
\cdots v_{i+1}^{-1}v_i^{-1}))$.
We will prove \eqref{eq:ipl} by using reverse induction on $i=0,\ldots,k-1$.

For the case $i=k-1$, assume that there exists
$x'\in {\operatorname{supp}}(\phi^*(v_k))\cap{\operatorname{Lk}}_\Lambda(v_{k-1}')$.
See Figure~\ref{fig:spl}(a).
Since $x'\in {\operatorname{Lk}}_\Lambda(v_{k-1}')$,
$\{v_{k-1}',x'\}$ is an edge in $\Lambda$.
Since $x'\in{\operatorname{supp}}(\phi^*(v_k)) \subseteq p^{-1}(v_k)$,
the edge $\{v_{k-1}',x'\}$ maps to the edge $\{v_{k-1},v_k\}$ by $p$.
Since $p$ is a covering, $v_k'=x'$ by the unique path lifting property.
This is a contradiction because $x'\in\Lambda$ but
$v_k'\not\in\Lambda$.
Therefore ${\operatorname{supp}}(\phi^*(v_k))\cap{\operatorname{Lk}}_\Lambda(v_{k-1}')=\emptyset$.

Now, assume that the claim is true for $i=j+1$ for some $0 \leqslant j \leqslant k-2$, hence
\begin{equation}\label{eq:ipl2}
v_{j+1}'\not\in {\operatorname{supp}}(\phi^*(v_{j+1}\cdots
 v_{k-1}v_k v_{k-1}^{-1}\cdots v_{j+1}^{-1})).
\end{equation}
Assume that there exists
$x'\in{\operatorname{supp}}(\phi^*(v_{j+1}\cdots  v_{k-1}v_k v_{k-1}^{-1}\cdots v_{j+1}^{-1}))
\cap{\operatorname{Lk}}_\Lambda(v_{j}')$.
See Figure~\ref{fig:spl}(b). Then
$$x'\ne v_{j+1}'$$
by \eqref{eq:ipl2}.
Since
$p(x')\in{\operatorname{supp}}(v_{j+1}\cdots  v_{k-1}v_k v_{k-1}^{-1}\cdots v_{j+1}^{-1})
=\{v_{j+1},\ldots,v_k\}$,
we have $p(x')=v_l$ for some $j+1 \leqslant l \leqslant k$.
Since $x'\in{\operatorname{Lk}}_\Lambda(v_{j}')$, $\{v_j',x'\}$ is an edge in $\Lambda$,
hence $\{v_j,v_l\}=\{p(v_j'), p(x')\}$ is an edge in $\Gamma$.
If $l \geqslant j+2$, this contradicts
the assumption that $\alpha$ is an induced path.
Therefore $l=j+1$.
Then $x'=v_{j+1}'$ by the unique path lifting property as before.
This is a contradiction to $x'\ne v_{j+1}'$.
This completes the proof of Claim.
\end{proof}

By the above claim, $v'=v_0'\not\in{\operatorname{supp}}(\phi^*(w))$,
hence $\phi$ is not $v'$-surviving.
This contradicts the assumption that $\phi$ is $F$-surviving because
$v'\in F$.
\end{proof}

\begin{figure}
\begin{tabular}{ccc}
$\begin{xy}
( 3, 0)*{\bullet};
( 3, 0); (10,-4) *{\bullet} **@{-} ;
( 3, 0); (10, 4) *{\bullet} **@{-} ;
(10,-4); (20,-4) *{\bullet} **@{-} ;
(10,-4); (20, 4) *{\bullet} **@{-} ;
(10, 4); (20, 4) *{\bullet} **@{-} ;
(10, 4); (20,-4) *{\bullet} **@{-} ;
(20,-4); (30,-4) *{\bullet} **@{-} ;
(20,-4); (30, 4) *{\bullet} **@{-} ;
(20, 4); (30,-4) *{\bullet} **@{-} ;
(20, 4); (30, 4) *{\bullet} **@{-} ;
(30,-4); (40,-4) *{\bullet} **@{-} ;
(30,-4); (40, 4) *{\bullet} **@{-} ;
(30, 4); (40,-4) *{\bullet} **@{-} ;
(30, 4); (40, 4) *{\bullet} **@{-} ;
( 0, 0) *{v_0};
(10,-6) *{v_1};
(20,-6) *{v_2};
(30,-6) *{v_3};
(40,-6) *{v_4};
(10, 6) *{u_1};
(20, 6) *{u_2};
(30, 6) *{u_3};
(40, 6) *{u_4};
\end{xy}$
&\qquad\qquad&
$\begin{xy}
( 3, 0)*{\bullet};
( 3, 0); (10,-4) *{\bullet} **@{-} ;
( 3, 0); (10, 4) *{\bullet} **@{-} ;
(10,-4); (20,-4) *{\bullet} **@{-} ;
(10,-4); (20, 4) *{\bullet} **@{-} ;
(10, 4); (20, 4) *{\bullet} **@{-} ;
(10, 4); (20,-4) *{\bullet} **@{-} ;
(20,-4); (30,-4) *{\bullet} **@{-} ;
(20,-4); (30, 4) *{\bullet} **@{-} ;
(20, 4); (30,-4) *{\bullet} **@{-} ;
(20, 4); (30, 4) *{\bullet} **@{-} ;
(30,-4); (40,-4) *{\bullet} **@{-} ;
(30,-4); (40, 4) *{\bullet} **@{-} ;
(30, 4); (40,-4) *{\bullet} **@{-} ;
(30, 4); (40, 4) *{\bullet} **@{-} ;
(40,-4); (47, 0) *{\bullet} **@{-} ;
(40, 4); (47, 0) *{\bullet} **@{-} ;
( 0, 0) *{v_0};
(10,-6) *{v_1};
(20,-6) *{v_2};
(30,-6) *{v_3};
(40,-6) *{v_4};
(10, 6) *{u_1};
(20, 6) *{u_2};
(30, 6) *{u_3};
(40, 6) *{u_4};
(50, 0) *{v_5};
\end{xy}$\\[2em]
(a) $\Gamma_m$ with $m=9$ &&
(b) $\Gamma_m$ with $m=10$
\end{tabular}
\caption{The graph $\Gamma_m$ in Theorem~\ref{main:lowerbd}}
\label{fig:lowerbd}
\end{figure}
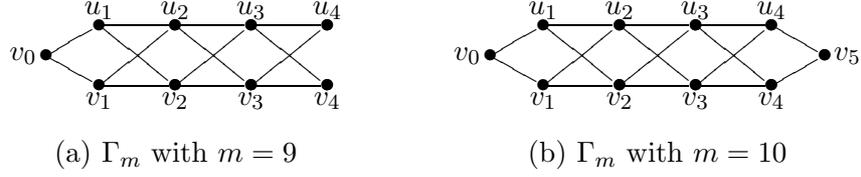

\begin{theorem}\label{main:lowerbd}
For each integer $m \geqslant 2$, there exists a graph  $\Gamma_m$ with $m$ vertices
such that if $\phi:T\to\Gamma_m$ is an immersion of a finite tree $T$ into $\Gamma_m$
and $\phi^*:G(\Gamma_m)\to G(T)$ is injective,
then $|V(T)| \geqslant 2^{m/4}$.
\end{theorem}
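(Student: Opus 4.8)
The plan is to take $\Gamma_m$ to be the graph of Figure~\ref{fig:lowerbd}: a chain of $\lfloor (m-1)/2\rfloor$ copies of $K_{2,2}$ glued in a row, with a distinguished end vertex $v_0$ of degree $2$ (and, when $m$ is even, an extra single vertex at the far end). Writing the successive levels as $\{v_i,u_i\}$, the only edges of $\Gamma_m$ join consecutive levels, and all four such edges are present. First I would count the induced paths starting at $v_0$. Since edges occur only between consecutive levels, a vertex at level $i$ is adjacent to no vertex of level $\leqslant i-2$; hence an induced path from $v_0$ can never return to an earlier level without creating a chord, so every induced path from $v_0$ is strictly forward and is obtained by choosing $v_i$ or $u_i$ independently at each level. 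This produces $2^{\lfloor(m-1)/2\rfloor}$ distinct maximal induced paths starting at $v_0$, which is comfortably at least $2^{m/4}$ (the tiny cases $m=2,3$ being immediate).

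Now suppose $\phi:T\to\Gamma_m$ is an immersion of a finite tree with $\phi^*$ injective. The core of the argument is to produce a lift $v_0'$ of $v_0$ at which $\phi$ has IPL, and then to count endpoints. Granting such a $v_0'$, each of the $\geqslant 2^{m/4}$ maximal induced paths from $v_0$ lifts to a path in $T$ starting at $v_0'$. Since $T$ is a tree, any two of its vertices are joined by a unique simple path; so two such lifts sharing an endpoint would have to coincide as paths, and applying $\phi$ would force the two induced paths in $\Gamma_m$ to coincide as well. Hence distinct induced paths have distinct lifted endpoints in $T$, and these $\geqslant 2^{m/4}$ endpoints already give $|V(T)|\geqslant 2^{m/4}$.

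It remains to produce $v_0'$ together with IPL at $v_0'$, and this is where I expect the real work to be. By Theorem~\ref{thm:ipl} it suffices to exhibit a lift $v_0'$ at which $\phi$ is $v_0'$-surviving, and I would deduce this from injectivity by induction on $m$ through Lemma~\ref{lem:link-s}. Two observations make the induction run. (i) Injectivity descends to induced subgraphs: for $\Gamma_i=\Gamma_m{\backslash}\{v_0,x_1,\dots,x_{i-1}\}$ the inclusion $G(\Gamma_i)\hookrightarrow G(\Gamma_m)$ identifies $\phi_i^*$ with the restriction of $\phi^*$ (the defining products agree, and they already lie in the subgroup $G(\Lambda_i)\leqslant G(\Lambda)$), so $\ker\phi_i^*=\ker\phi^*\cap G(\Gamma_i)=1$. (ii) Injectivity forces structure on the lifts: every vertex must lift (else a generator lies in $\ker\phi^*$) and every edge must lift (else a non-adjacent-in-$\Gamma_m$, i.e.\ non-commuting, pair $\{v,x\}$ would give $[v,x]\in\ker\phi^*$), and, crucially, there must exist a lift of $v_0$ with full link $\phi({\operatorname{Lk}}_T(v_0'))={\operatorname{Lk}}_{\Gamma_m}(v_0)$. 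With such a $v_0'$, Lemma~\ref{lem:link-s} reduces $v_0'$-surviving to the $x_i'$-surviving of the smaller maps $\phi_i:\Lambda_i\to\Gamma_i$, which the inductive hypothesis supplies.

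The main obstacle is precisely the full-link part of observation (ii): upgrading injectivity of $\phi^*$ to the existence of a full-link lift of $v_0$ at each stage. A lift missing one neighbour already fails to be surviving (the word $vxv^{-1}$ then exhibits an innermost cancellation of that lift), yet one missing neighbour does not by itself destroy injectivity, so the implication must be global rather than local. The instructive test case is $C_4=K_{2,2}$: the immersion $P_5\to C_4$ has no full-link lift of an end vertex and indeed $\phi^*$ is not injective, whereas $P_6\to C_4$ acquires a full-link lift and becomes injective. The mechanism I would isolate is that, when the domain is a tree, the absence of a full-link lift of $v_0$ lets one assemble a nontrivial element of $\ker\phi^*$ from the resulting cancellations — exactly as in the commutator word of Figure~\ref{fig:square} — so that injectivity does guarantee a full-link lift and hence IPL at $v_0$, closing the argument.
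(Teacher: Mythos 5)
Your counting of induced paths in $\Gamma_m$ and your distinct-endpoints argument in a tree are both fine, but the pivotal step --- upgrading injectivity of $\phi^*$ to the existence of a lift $v_0'$ of $v_0$ at which $\phi$ \emph{itself} is surviving, hence has IPL --- is not merely the ``real work'' you deferred: it is false, and a counterexample lives inside the theorem's own family of graphs. The graph $\Gamma_4$ is the $4$-cycle $C_4$, and $\phi_{6,4}:P_6\to C_4$ is an immersion of a tree with $\phi_{6,4}^*$ injective by Theorem~\ref{main:cdk} ($n=6=2m-2$). Label the vertices of $P_6$ as $w_0,\ldots,w_5$, mapping to $v_0,v_1,v_2,v_3,v_0,v_1$. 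The two lifts of $v_0$ are $w_0$ and $w_4$: the induced path $(v_0,v_3,v_2)$ has no lift from $w_0$ (whose sole neighbour maps to $v_1$), and $(v_0,v_1,v_2)$ has no lift from $w_4$ (the lift reaches the endpoint $w_5$ and gets stuck). So no lift of $v_0$ has IPL, and by Theorem~\ref{thm:ipl} the map $\phi$ is not $v'$-surviving at any lift of $v_0$ --- even though $w_4$ has full link, which also shows your full-link-implies-surviving mechanism cannot be right. Concretely, your induction through Lemma~\ref{lem:link-s} breaks at the first stage: with $v'=w_4$ and $x_1=v_1$, the restricted map $\phi_1:\Lambda_1\to\Gamma_1$ has $x_1'=w_5$ isolated in $\Lambda_1$, so $\phi_1^*(v_1v_2v_1^{-1})=w_1w_5w_2w_5^{-1}w_1^{-1}$ has an innermost cancellation of $w_5$ (as ${\operatorname{supp}}(w_2)\cap{\operatorname{Lk}}_{\Lambda_1}(w_5)=\emptyset$); injectivity of the smaller maps can at best give surviving at \emph{some} lift of $x_i$, not at the specific vertices $x_i'\in{\operatorname{Lk}}_\Lambda(v')$ that Lemma~\ref{lem:link-s} requires.

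The paper circumvents exactly this obstruction, and the detour is visible in the exponent. Instead of seeking IPL on $T$, it enlarges $T$ to $T_1=\bigcup_{\sigma\in\Sigma}\sigma(T)$ over all deck transformations of the universal cover meeting a transversal $F$ with $|F|=m$ and $p(F)=V(\Gamma_m)$; Lemma~\ref{lem:deck} converts injectivity of $\phi^*$ into $F$-surviving for the \emph{enlarged} map $p|_{T_1}$, Theorem~\ref{thm:ipl} then gives IPL for $F$, and the endpoint count you envisioned is carried out in $T_1$, yielding $|V(T_1)|\geqslant 2^{m/2}$. The price of the enlargement is Remark~\ref{rmk:deck-size}: $|V(T_1)|\leqslant|\Sigma|\cdot|V(T)|\leqslant|V(T)|^2$, whence $|V(T)|\geqslant 2^{m/4}$. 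Note that your route, had it worked, would have proved the stronger bound $|V(T)|\geqslant 2^{\lfloor(m-1)/2\rfloor}$; the square root separating that from the stated $2^{m/4}$ is precisely the cost of the deck-transformation trick, and the $P_6\to C_4$ example explains why the authors pay it.
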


\begin{proof}
The theorem is obvious for $m=2$.
For $m \geqslant 3$, let $\Gamma_m$ be the graph defined as
\begin{align*}
V(\Gamma_{2k+1})&=\{u_i,v_i:1 \leqslant i \leqslant k\}\cup \{v_0\},\\
V(\Gamma_{2k+2})&=\{u_i,v_i:1 \leqslant i \leqslant k\}\cup \{v_0,v_{k+1}\},\\
E(\Gamma_{2k+1})&= \{ \{v_i,v_{i+1}\}, \{u_i,u_{i+1}\}, \{v_i,u_{i+1}\}, \{u_i,v_{i+1}\}: 1 \leqslant i \leqslant k-1\}\\
&\qquad\cup \{ \{v_0,u_1\}, \{v_0,v_1\}\},\\
E(\Gamma_{2k+2})&=E(\Gamma_{2k+1})\cup \{ \{v_k,v_{k+1}\}, \{u_k,v_{k+1}\} \}.
\end{align*}
In other words, there is an edge between every pair of vertices with adjacent indices.
See Figure~\ref{fig:lowerbd} for the cases $m=9,10$.

Let $\phi:T\to\Gamma_m$ be an immersion of a tree $T$ into $\Gamma_m$
such that $\phi^*:G(\Gamma_m)\to G(T)$ is injective.
Notice that $\phi$ is surjective on the sets of vertices.

Because $T$ is a tree and $\phi$ is an immersion,
we can consider $\phi$ as a restriction of the universal cover
$p:\tilde\Gamma_m\to\Gamma_m$ to an induced subtree $T$,
i.e. $\phi=p|_T$.

Take any subset $F$ of $V(\tilde\Gamma_m)$ such that
$|F|=m$ and $p(F)=V(\Gamma_m)$.
(For example, $F$ can be chosen to be the set of vertices
of a lift of a maximal tree in $\Gamma_m$
to $\tilde\Gamma_m$.)
Let $\Sigma$ be the set of all deck transformations
$\sigma:\tilde\Gamma_m\to\tilde\Gamma_m$ such that
$\sigma(T)\cap F\ne\emptyset$, and let
$T_1$ be the induced subgraph of $\tilde\Gamma_m$ on
$$
\bigcup_{\sigma\in\Sigma}\sigma(V(T)).
$$
Then $F \subseteq  V(T_1)$, and
$\phi_1=p|_{T_1}:T_1\to\Gamma_m$ is $F$-surviving by Lemma~\ref{lem:deck},
hence it has IPL for $F$ by Theorem~\ref{thm:ipl}.
By Remark~\ref{rmk:deck-size},
$$|V(T_1)| \leqslant |\Sigma|\cdot|V(T)| \leqslant |V(T)|^2.$$

Now, we will show that $|V(T_1)| \geqslant 2^{m/2}$.

First, assume that $m=2k+1$.
The induced paths $\alpha$
starting from $v_0$ in $\Gamma_{2k+1}$ are of the form
$\alpha=(v_0,x_1,x_2,\ldots,x_l)$, where $0 \leqslant  l \leqslant k$ and
$x_i\in\{u_i,v_i\}$ for $1 \leqslant  i \leqslant l$.
There are $2^l$ such paths $\alpha$ of length $l$, and
all their lifts to $\tilde\Gamma_{2k+1}$ starting from $v_0'$
are contained in $T_1$.
If $\alpha_1$ and $\alpha_2$ are distinct induced paths
starting from $v_0$ in $\Gamma_{2k+1}$, then their lifts
to $\tilde\Gamma_{2k+1}$ starting from $v_0'$ have distinct
endpoints because $\tilde\Gamma_{2k+1}$ is the universal cover.
Hence
$$
|V(T_1)| \geqslant 1+2+\cdots+2^k=2^{k+1}-1=2^{(m+1)/2}-1.
$$

When $m=2k+2$, the same argument as above gives
\begin{align*}
|V(T_1)|
& \geqslant 1+2+\cdots+2^k+2^k=2^{k+1}-1+2^k\\
&=\frac32\cdot 2^{k+1}-1
=\frac32\cdot 2^{m/2}-1.
\end{align*}
Therefore, for $m \geqslant 3$, we have $|V(T_1)| \geqslant 2^{m/2}$,
and hence $|V(T)| \geqslant |V(T_1)|^{1/2} \geqslant 2^{m/4}$.
\end{proof}

\subsection{Embedding of RAAGs on cycle graphs into RAAGs on path graphs}
\label{ssec:cdk}

Let $C_m$ and $P_n$ denote the cycle
and path graphs on $m$ and $n$ vertices, respectively.
We denote their vertices by
$$
V(C_m)=\{v_0,v_1,\ldots,v_{m-1}\}\quad\mbox{and}\quad
V(P_n)=\{v_0',v_1',\ldots,v_{n-1}'\}.
$$
The edge sets of $C_m$ and $P_n$ are
\begin{align*}
E(C_m)&=\{\{v_i,v_{i+1}\}:0 \leqslant i \leqslant m-2\}\cup \{\{v_{m-1},v_0\}\},\\
E(P_n)&=\{\{v_j',v_{j+1}'\}:0\leqslant j\leqslant n-2\}.
\end{align*}

Let $\phi_{n,m}:P_n\to C_m$ be the immersion defined by
$$\phi_{n,m}(v_j')=v_{j\bmod m}.$$
Regard $P_n$ as an induced subgraph of the universal cover
$\tilde C_m$ of $C_m$ which is the bi-infinite path graph.
Then the map $\phi_{n,m}$ is the restriction of the universal cover
$p:\tilde C_m\to C_m$ to $P_n$.

\begin{figure}
$$\begin{xy}
(0, 30) *{\bullet}; (10,30) *{\bullet}  **@{-} ; (15,30) **@{-};
(20,30) *{\cdots}; (25,30) ;
(30,30) *{\bullet}  **@{-} ; (40,30) *{\bullet}  **@{-} ; (50,30) *{\bullet}  **@{-} ;
(55,30)  **@{-} ; (60,30) *{\cdots}; (65,30);
(70,30) *{\bullet}  **@{-} ;
(0, 27) *{v_2''};
(10,27) *{v_3''};
(30,27) *{v_{m-1}''};
(40,27) *{v_0'};
(50,27) *{v_1'};
(70,27) *{v_{m-1}'};
(78,30) *+!L{\txt{$P_{2m-2}$}};
(10,20) *{\bullet}; (15,20) **@{-};
(20,20) *{\cdots}; (25,20) ;
(30,20) *{\bullet}  **@{-} ; (40,20) *{\bullet}  **@{-} ; (50,20) *{\bullet}  **@{-} ;
(55,20)  **@{-} ; (60,20) *{\cdots}; (65,20);
(70,20) *{\bullet}  **@{-} ;
(10,17) *{v_3''};
(30,17) *{v_{m-1}''};
(40,17) *{v_0'};
(50,17) *{v_1'};
(70,17) *{v_{m-1}'};
(78,20) *+!L{\txt{$P_{2m-3}$}};
(40,10) *{\bullet};
(10, 0) *{\bullet} **@{-}; (20, 0) *{\bullet} **@{-}; (30, 0) *{\bullet} **@{-};
(35, 0) **@{-}; (40, 0) *{\cdots}; (45,0);
(50, 0) *{\bullet}  **@{-} ; (60,0) *{\bullet} **@{-};(70, 0) *{\bullet}  **@{-};
(40,10) **@{-};
(40, 7) *{v_0};
(10,-3) *{v_1};
(20,-3) *{v_2};
(30,-3) *{v_3};
(50,-3) *{v_{m-3}};
(60,-3) *{v_{m-2}};
(70,-3) *{v_{m-1}};
(78, 0) *+!L{\txt{$C_m$}};
\end{xy}
$$
\caption{The graphs $P_{2m-2}$, $P_{2m-3}$ and $C_m$}
\label{fig:cdk}
\end{figure}
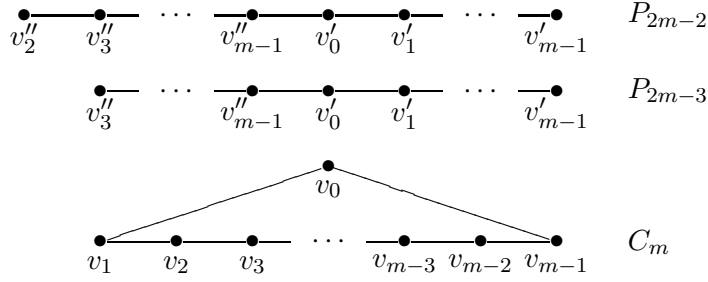

In \cite{CDK13}, M.{} Casals-Ruiz, A.{} Duncan and I.{} Kazachkov
showed that $\phi_{8,5}^*:G(C_5)\to G(P_8)$ is injective.
Using the induced path lifting property,
we generalize their result as follows.

\begin{theorem}\label{main:cdk}
For each $m \geqslant 3$, $\phi_{n,m}^*:G(C_m)\to G(P_n)$ is injective
if and only if $n \geqslant  2m-2$.
\end{theorem}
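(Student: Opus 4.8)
The plan is to prove the two directions separately, using the path lifting machinery developed in the excerpt. For the ``if'' direction, I would show that when $n \geqslant 2m-2$, the immersion $\phi_{n,m}$ has IPL (in fact SIPL) for a suitable set $F$ of vertices of $P_n$ with $\phi_{n,m}(F) = V(C_m)$; by Theorem~\ref{thm:sipl2surv} this makes $\phi_{n,m}$ $F$-surviving, and then $\phi_{n,m}^*$ is injective by Lemma~\ref{lem:KK10} (or the injectivity clause of Lemma~\ref{lem:link-s}). The key observation is that the induced paths in $C_m$ are exactly the paths that do not wrap all the way around the cycle; concretely, an induced path starting at a vertex $v$ can travel at most $m-1$ steps before it would have to revisit a neighbor of $v$ (the two endpoints of a would-be Hamiltonian path are adjacent in $C_m$, violating the induced condition). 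Hence every maximal induced path in $C_m$ has at most $m-1$ edges, so it spans at most $m$ consecutive vertices of the universal cover $\tilde C_m$. Since $P_{2m-2}$ contains a copy of $m$ consecutive vertices on each side of a chosen starting vertex, one checks that for $F$ chosen as a fundamental domain of $m$ consecutive vertices (say $v_0', \ldots, v_{m-1}'$), every induced path in $C_m$ starting from any $\phi_{n,m}(v_i')$ lifts into $P_n$ when $n \geqslant 2m-2$: the lift extends at most $m-1$ steps in either direction, and $2m-2$ vertices suffice to accommodate this spread. I would verify SIPL for $F$ directly by this counting, invoking Theorem~\ref{thm:sipl2surv}.

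For the ``only if'' direction, I would argue contrapositively: if $n \leqslant 2m-3$, then $\phi_{n,m}^*$ is not injective. By Theorem~\ref{thm:ipl}, if $\phi_{n,m}^*$ were injective then $\phi_{n,m}$ would be $F$-surviving (for $F$ containing a full fundamental domain), hence would have IPL for $F$. So it suffices to exhibit a vertex $v_i'$ in $P_n$ and an induced path in $C_m$ starting at $\phi_{n,m}(v_i') = v_i$ that fails to lift. The natural candidate is the maximal induced path of length $m-1$ starting at $v_i$; its lift in $\tilde C_m$ occupies $m$ consecutive vertices, and for some choice of starting vertex $v_i'$ near an endpoint of $P_n$ this lift runs off the end of $P_{2m-3}$. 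Thus IPL fails, contradicting injectivity.

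The main obstacle I anticipate is the careful bookkeeping at the boundary of the path graph: I must choose the set $F$ and the starting vertices so that in the $n \geqslant 2m-2$ case \emph{every} induced path from \emph{every} point of $F$ genuinely lifts, while in the $n \leqslant 2m-3$ case I can always position a long induced path so that its lift escapes. The subtlety is that different starting vertices of $F$ sit at different positions within $P_n$, and an induced path can spread in either direction; I need the worst-case spread (roughly $m-1$ steps one way) to fit. The inequality $2m-2 = 2(m-1)$ is exactly the sharp threshold: $m-1$ edges of room on each side of a central pivot, minus the shared pivot, yields a path graph on $2(m-1)$ vertices. Establishing that this threshold is sharp on both sides—lifting succeeds at $2m-2$ and fails at $2m-3$—is the crux, and it reduces to the combinatorial fact that maximal induced paths in $C_m$ have length exactly $m-1$.
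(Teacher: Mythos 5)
Your proposal has genuine gaps in both directions. For the ``if'' direction, your developed plan --- find $F\subseteq V(P_n)$ with $\phi_{n,m}(F)=V(C_m)$ for which SIPL (or IPL) holds, then invoke Lemma~\ref{lem:KK10} --- cannot work, because no such $F$ exists even for IPL. A maximal induced path in $C_m$ has $m-1$ vertices, i.e.\ $m-2$ edges (not ``$m-1$ edges'' spanning ``$m$ consecutive vertices'' as you write), and its lift from the vertex at position $j$ of $P_{2m-2}$ (coordinates $-(m-2),\ldots,0,\ldots,m-1$ in $\tilde C_m$, with $v_0'$ at $0$) runs monotonically to $j+(m-2)$ or $j-(m-2)$; both endpoints lie in $P_{2m-2}$ only for $j\in\{0,1\}$. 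So IPL holds for at most two adjacent vertices, whose image is two vertices of $C_m$; by Theorem~\ref{thm:ipl} it follows that $\phi_{2m-2,m}$ is \emph{not} $F$-surviving for any $F$ with $\phi(F)=V(C_m)$, and Lemma~\ref{lem:KK10} is unavailable. (Example~\ref{eg:cdk} already signals this: for $C_5\to P_8$ the paper obtains SIPL only for $\{v_0',v_1'\}$.) The paper instead proves injectivity of $\phi_{2m-2,m}^*$ via the second clause of Lemma~\ref{lem:link-s}: deleting $v_0$ (resp.\ $\{v_0,v_1\}$) from $C_m$ leaves a path graph, and $P_{2m-2}$ minus the corresponding fiber has a component mapping isomorphically onto it, which gives $v_1'$- and $v_{m-1}''$-surviving of the restricted maps together with injectivity of $\phi_1^*$, hence injectivity of $\phi^*$ with no global surviving set at all. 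You mention this lemma only parenthetically; at the sharp threshold $n=2m-2$ it is the essential tool, and your SIPL/counting route would at best prove injectivity for some larger $n$ (roughly $n\geqslant 3m-3$, the span needed for a whole fundamental domain to lift semi-induced paths).

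For the ``only if'' direction, your assertion ``by Theorem~\ref{thm:ipl}, if $\phi_{n,m}^*$ were injective then $\phi_{n,m}$ would be $F$-surviving'' is unjustified and in fact false: Theorem~\ref{thm:ipl} gives surviving $\Rightarrow$ IPL, and nothing gives injective $\Rightarrow$ surviving --- $\phi_{2m-2,m}^*$ itself is injective while $\phi_{2m-2,m}$ fails to be surviving at every vertex outside $\{v_0',v_1'\}$, as shown above. The correct bridge is Lemma~\ref{lem:deck}: injectivity of $\phi^*$ yields $F$-surviving only for the \emph{enlarged} map $p|_{T_1}$, where $T_1$ is the union of all deck translates of $P_n$ meeting $F$. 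The paper's proof at $n=2m-3$ succeeds precisely because one can place $F=\{v_0'\}$ with $m-3$ vertices on one side and $m-1$ on the other, so that no nontrivial translate of $P_{2m-3}$ by a multiple of $m$ contains $v_0'$; then $\Sigma=\{\operatorname{Id}\}$, $T_1=P_{2m-3}$, so $\phi$ itself is $v_0'$-surviving, and the unliftable induced path $(v_0,v_{m-1},v_{m-2},\ldots,v_2)$ contradicts Theorem~\ref{thm:ipl}. Your choice of a starting vertex ``near an endpoint of $P_n$'' is exactly where this breaks down: such a vertex is contained in a nontrivial deck translate of $P_n$, so $T_1$ strictly contains $P_n$ and the lift that ``runs off the end'' of $P_n$ may well exist in $T_1$, producing no contradiction. (Minor point: by Lemma~\ref{lem:KK9} it suffices to treat $n=2m-3$ alone, since $\ker\phi_{l,m}^*\subseteq\ker\phi_{k,m}^*$ for $k\leqslant l$.)
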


\begin{proof}
It suffices to show that $\phi_{2m-2,m}^*$ is injective and that
$\phi_{2m-3,m}^*$ is not injective,
because $\ker\phi_{l,m}^* \subseteq \ker\phi_{k,m}^*$ for $1\leqslant k\leqslant l$
by Lemma~\ref{lem:KK9}.

First, we will show that $\phi_{2m-2,m}^*$ is injective.
Let $\phi=\phi_{2m-2,m}:P_{2m-2}\to C_m$.
Renaming the vertices, we may assume that
$$V(P_{2m-2})=\{v_0',v_1',\ldots,v_{m-1}', v_2'',\ldots,v_{m-1}''\}$$
as in Figure~\ref{fig:cdk}
so that $\phi$ maps $v_i'$ and $v_i''$ to $v_i$ for each $i$.
Notice that ${\operatorname{Lk}}_{C_m}(v_0)=\{v_1,v_{m-1}\}$
and ${\operatorname{Lk}}_{P_{2m-2}}(v_0')=\{v_1',v_{m-1}''\}$.
Let
\begin{alignat*}{2}
\Gamma_1&=C_m{\backslash} v_0,
     &\Lambda_1&=P_{2m-2}{\backslash} \phi^{-1}(v_0)=P_{2m-2}{\backslash} v_0',\\
\Gamma_2&=C_m{\backslash}\{v_0,v_1\},\qquad
     &\Lambda_2&=P_{2m-2}{\backslash} \phi^{-1}(\{v_0,v_1\})
     =P_{2m-2}{\backslash}\{v_0',v_1'\}.
\end{alignat*}
Then $\phi(\Lambda_i)=\Gamma_i$ for $i=1,2$.
Let $\phi_i=\phi(\Lambda_i,\Gamma_i)$ for $i=1,2$.

The graph $\Lambda_1$ has two components.
One of them is the path graph on $\{v_1',\ldots,v_{m-1}'\}$,
and it is isomorphic to $\Gamma_1$ under $\phi_1$.
Hence $\phi_1$ is $v_1'$-surviving and $\phi_1^*$ is injective.
Similarly, $\phi_2$ is $v_{m-1}''$-surviving (and $\phi_2^*$ is injective).
Therefore $\phi^*=\phi_{2m-2,m}^*$ is injective by Lemma~\ref{lem:link-s}.

\smallskip
Now, we will show that $\phi_{2m-3,m}^*$ is not injective.
Let $\phi=\phi_{2m-3,m}:P_{2m-3}\to C_m$.
Assume that $\phi^*$ is injective.
Renaming the vertices, we may assume that
$$V(P_{2m-3})=\{v_0',v_1',\ldots,v_{m-1}',v_3'',\ldots,v_{m-1}''\}$$
as in Figure~\ref{fig:cdk} so that $\phi$ maps $v_i'$ and $v_i''$ to $v_i$ for each $i$.
Let $\Sigma$ be the set of all deck transformations $\sigma:\tilde C_m\to \tilde C_m$ such that
$\sigma(P_{2m-3})\cap\{v_0'\}\ne\emptyset$.
It is easy to see that $\Sigma=\{ \operatorname{Id} \}$.
Then $\phi:P_{2m-3}\to C_m$ is $v_0'$-surviving by Lemma~\ref{lem:deck},
hence $\phi$ has IPL for $v_0'$ by Theorem~\ref{thm:ipl}.
Thus there is a path in $P_{2m-3}$ that is a lift
of the induced path $(v_0,v_{m-1},v_{m-2},\ldots,v_2)$, which is impossible.
Therefore $\phi^*=\phi_{2m-3,m}^*$ is not injective.
\end{proof}

\section*{Acknowledgement}
The authors are grateful to Sang-hyun Kim for the helpful conversation,
and to Young Soo Kwon, Sang-il Oum and Paul Seymour for
making the example for Theorem~\ref{main:lowerbd}.
The first author was partially supported by Basic Science Research Program
through the National Research Foundation of Korea (NRF)
funded by the Ministry of Science, ICT \& Future Planning (NRF-2012R1A1A3006304).
The second author was partially supported by Basic Science Research Program
through the National Research Foundation of Korea (NRF)
funded by the Ministry of Education (NRF-2013R1A1A2007523).

\end{document}